\newtheorem{theorem}{Theorem}[section]
\newtheorem{lemma}[theorem]{Lemma}
\newtheorem{proposition}[theorem]{Proposition}
\newtheorem{corollary}[theorem]{Corollary}
\theoremstyle{definition}
\newtheorem{definition}[theorem]{Definition}
\newtheorem{question}[theorem]{Question}
\newtheorem{remark}[theorem]{Remark}
\newcommand{\defn}[1]{\emph{#1}}
\newcommand{\arxiv}[1]{\href{http://arxiv.org/abs/#1}{{\tiny\tt arXiv:#1}}}
\newcommand{\DOI}[1]{\href{http://doi.org/#1}{\color{purple}{\tiny\tt DOI:#1}}}
\let\ol\overline
\let\ul\underline
\newcommand{\ep}{\varepsilon}
\newcommand{\res}{\textnormal{Res}}
\newcommand{\Z}{\mathbf{Z}}
\newcommand{\N}{\mathbf{N}}
\newcommand{\Q}{\mathbf{Q}}
\newcommand{\R}{\mathbf{R}}
\newcommand{\bP}{\mathbf{P}}
\newcommand{\A}{\mathbf{A}}
\newcommand{\cE}{\mathcal{E}}
\newcommand{\cP}{\mathcal{P}}
\newcommand{\cV}{\mathcal{V}}
\renewcommand{\sp}{\textnormal{span}}
\newcommand{\codim}{\textnormal{codim}}
\newcommand{\str}{\textnormal{str}}
\newcommand{\astr}{\textnormal{astr}}
\newcommand{\Gr}{\mathbf{Gr}}
\title{Two improvements in Birch's theorem on forms}
\date{\today}
\author{Amichai Lampert}
\address{Department of Mathematics, University of Michigan, Ann Arbor, MI}
\email{\href{mailto:amichai@umich.edu}{amichai@umich.edu}}
\thanks{AL was supported by NSF grants DMS-2402041 and DMS-1926686.}
\author{Andrew Snowden}
\address{Department of Mathematics, University of Michigan, Ann Arbor, MI}
\email{\href{mailto:asnowden@umich.edu}{asnowden@umich.edu}}
\urladdr{\url{http://www-personal.umich.edu/~asnowden/}}
\thanks{AS was supported by NSF grant DMS-2301871.}
\begin{document}

\begin{abstract}
Let $K$ be a \defn{Birch field}, that is, a field for which every diagonal form of odd degree in sufficiently many variables admits a non-zero solution; for example, $K$ could be the field of rational numbers. Let $f_1, \ldots, f_r$ be homogeneous forms  of odd degree over $K$ in $n$ variables, and let $Z$ be the variety they cut out. Birch proved if $n$ is sufficiently large then $Z(K)$ contains a non-zero point. We prove two results which show that $Z(K)$ is actually quite large. First, the Zariski closure of $Z(K)$ has bounded codimension in $\A^n$. And second, if the $f_i$'s have sufficiently high strength then $Z(K)$ is in fact Zariski dense in $Z$. The proofs use recent results on strength, and our methods build on recent work of Bik, Draisma, and Snowden, which established similar improvements to Brauer's theorem on forms.
\end{abstract}

\maketitle
\tableofcontents

\section{Introduction}

The arithmetic of forms in many variables is a subject that has received much attention over the years; see \cite{Greenberg} for a general introduction. Brauer \cite{Brauer} proved an important result in this area in 1945: he showed that, over certain types of fields (including all totally imaginary number fields), any system of forms in sufficiently many variables admits a non-trivial solution. In the predecessor to this paper \cite{BDS}, we established some strengthenings of Brauer's theorem. In particular, over the relevant fields---which we named \defn{Brauer fields}---we showed that the Zariski closure of the set of solutions has bounded codimension. So, not only is the set of non-trivial solutions non-empty, but it is really quite large.

The rational numbers are obviously not a Brauer field, since a positive definite quadratic form will never have a non-trivial zero, no matter how many variables there are. About twelve years after Brauer proved his theorem, Birch \cite{Birch} proved a variant that circumvents this issue: he showed that over certain fields, which we name \defn{Birch fields}, any system of odd degree forms in sufficiently many variables admits a non-trivial solution. The rational numbers are a Birch field, and, in fact, so is any number field. The purpose of this paper is to establish improvements to Birch's theorem analogous to the results of \cite{BDS}.

\subsection{Statement of results} \label{ss:results}

We now go about precisely stating our results. To begin, we introduce the relevant classes of fields.

\begin{definition} \label{defn:birch}
Let $K$ be a field. For a positive integer $d$, define $N_K(d)$ to be the minimal positive integer $n$ such that the following condition holds: for any non-zero $a_1,\ldots,a_n\in K$, the equation
\begin{displaymath}
a_1x_1^d + \ldots + a_nx_n^d=0
\end{displaymath}
admits a non-zero solution in $K$. We put $N_K(d)=\infty$ if no such $n$ exists. We say that $K$ is a \defn{Brauer field} (resp.\ \defn{Birch field}) if $N_K(d)$ is finite for all $d$ (resp.\ all odd $d$).
\end{definition}

Number fields are Birch fields by a theorem of Peck \cite[Theorem~3]{Peck}. In particular, the rational numbers $\Q$ are a Birch field\footnote{Birch states this was well-known before Peck's work, but as far as we can tell, it was just a folk theorem.}. We show that any finitely generated extension of $\R$ is a Birch field (Corollary~\ref{cor:fg-birch}). Of course, any Brauer field is also a Birch field, and many examples of Brauer fields are given in \cite{BDS}.

We assume that $K$ is a Birch field for the remainder of \S \ref{ss:results}. Let $\ul{f} = (f_1,\ldots,f_r)$ be a collection of forms (i.e., homogeneous polynomials) of odd degrees $\ul{d}=(d_1, \ldots, d_r)$ on a finite dimensional $K$-vector space $V$, and let $Z \subset V$ be the variety defined by the equations $f_i=0$. Birch's result alluded to above is the following theorem:

\begin{theorem}[Birch] \label{Birch}
If $\dim{V} \ge C_{\ref*{Birch}}(\ul{d})$ then $Z(K)\neq \{0\}.$  
\end{theorem}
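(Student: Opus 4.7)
The plan is to follow Birch's original inductive strategy. A direct induction on $r$ fails---a single zero of $f_r$ gives no algebraic variety on which to restrict the remaining forms---so I would prove the following stronger statement by induction on $r$: for every $m \ge 1$ there is a constant $\tilde C(\ul{d}; m)$ such that if $\dim V \ge \tilde C(\ul{d}; m)$, then $Z$ contains a linear subspace of dimension $m$. The theorem of Birch is recovered by setting $m = 1$. Granted this strengthening, the inductive step is essentially painless: using the $r = 1$ case of the strengthened statement to find a large linear subspace $V' \subset V$ on which $f_r$ vanishes identically, one then applies the inductive hypothesis to $f_1|_{V'}, \ldots, f_{r-1}|_{V'}$ on $V'$.

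The crux is therefore the single-form case: for any odd integer $d$ and any $m \ge 1$, a form of degree $d$ on a sufficiently large $K$-vector space vanishes on some $m$-dimensional linear subspace. The geometric idea is to \emph{diagonalize} $f$, i.e.\ to find linearly independent vectors $v_1, \ldots, v_k$ (with $k$ much larger than $m$) satisfying
\begin{displaymath}
f(c_1 v_1 + \cdots + c_k v_k) = f(v_1) c_1^d + \cdots + f(v_k) c_k^d,
\end{displaymath}
after which the Birch field hypothesis supplies a nontrivial zero, and mild iteration yields an $m$-dimensional subspace of zeros. The $v_i$ are built one at a time, each new one required to lie in the common zero set of a family of ``mixed polarization'' auxiliary forms of degrees $1$ through $d-1$ evaluated against the previously chosen $v_j$. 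The linear (degree $1$) auxiliaries cut out only a codimension-bounded subspace, so they are harmless; the higher-degree ones are what force the ambient dimension to grow.

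The principal obstacle is that the mixed polarizations come in all intermediate degrees, including even ones, whereas the Birch hypothesis directly provides zeros only of odd-degree diagonal forms---and indeed no analogous statement holds for arbitrary even-degree forms over $\Q$, as witnessed by positive definite quadratics. Handling these even-degree auxiliaries requires a carefully nested induction in which one exploits their special structure: each such auxiliary is a polarization of the fixed odd-degree form $f$ against prescribed vectors, and through a further change of variables it can be massaged to a configuration where its zero locus is controlled by an instance of the lemma at smaller $d$ (or smaller $m$). Managing this double recursion---tracking how the ambient dimension must grow at each stage so that the final subspace spanned by $v_1, \ldots, v_k$ has the required size while all vanishing conditions are simultaneously met---is the technical heart of the proof and determines the (tower-type) growth rate of $C_{\ref*{Birch}}(\ul{d})$.
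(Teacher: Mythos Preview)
The paper does not give its own proof of this theorem; it is quoted as Birch's result and cited to \cite{Birch}. What the paper \emph{does} contain is a discussion in \S 1.2 of the key idea behind Birch's argument, and it is exactly against that discussion that your proposal should be checked.

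Your strengthening to ``$Z$ contains an $m$-dimensional linear subspace'' and the reduction from $r$ forms to one form via that strengthening are standard and correct. The trouble is in your single-form case. You propose to build the $f$-orthogonal vectors $v_1,\ldots,v_k$ \emph{sequentially}, solving at each step the mixed-polarization equations for $v_n$ with $v_1,\ldots,v_{n-1}$ already fixed. As you yourself observe, among those equations are forms of every degree $1,\ldots,d-1$ in $v_n$, including even degrees; over a Birch field such as $\Q$ a degree-$2$ polarization of $f$ may well be a positive definite quadratic form in $v_n$ with no nontrivial rational zero, regardless of $\dim V$. Your proposed fix---that these even-degree auxiliaries ``can be massaged'' via a change of variables to an instance of the lemma at smaller $d$ or $m$---is not a proof: once $v_1,\ldots,v_{n-1}$ are frozen, the equation in $v_n$ alone is simply an even-degree form, and nothing in the inductive hypotheses provides a zero for it.

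The paper explains (\S 1.2) that this is precisely where Brauer's sequential method fails over Birch fields, and that Birch's remedy is to solve for the entire sequence $v_1,\ldots,v_k$ \emph{at once}. The orthogonality conditions are then multi-homogeneous of total degree $d$ in the blocks $v_1,\ldots,v_k$; since $d$ is odd, in each such equation at least one block $v_i$ must appear to odd degree strictly less than $d$. That structural fact is what drives the induction (it is formalized in the paper as Proposition~\ref{prop:multi}). Your proposal does not contain this idea, and the vague ``nested induction'' you describe does not substitute for it.
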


Throughout this paper, when a new constant appears in a statement, we are implicitly asserting its existence. Thus the above theorem really means that a quantity $C_{\ref*{Birch}}$ exists such that the statement is true. All constants will (possibly) depend on the field $K$, but we omit this from the notation. Constants are labeled by the number of the statement in which they first appear.

Our main results improve on Birch's theorem by showing that $Z(K)$ is actually quite large. \emph{For the rest of the paper, we assume that $K$ has characteristic zero;} see Remark~\ref{rmk:char} for more details about the characteristic. Our first main result is:

\begin{theorem}\label{codim-gen}
The Zariski closure of $Z(K)$ has codimension at most $C_{\ref*{codim-gen}}(\ul{d})$.
\end{theorem}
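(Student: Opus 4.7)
The plan is to adapt the strategy of \cite{BDS} for Brauer's theorem to the Birch setting, using the companion Zariski density result for high-strength systems (the other main theorem of this paper) as the principal new input.

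The first step is a \emph{strength reduction}. Building on the theory of strength as developed by Ananyan--Hochster and refined in \cite{BDS}, I would produce a tuple $\ul{g} = (g_1, \ldots, g_s)$ of odd degree forms on $V$ such that: each $f_i$ lies in the ideal $(g_1, \ldots, g_s)$, so $V(\ul{g}) \subseteq Z$; the collective strength of $\ul{g}$ exceeds any prescribed constant; and $s$, together with the degrees of the $g_j$, is bounded in terms of $\ul{d}$. Without the odd-degree constraint this is a standard strength-collapse construction. Oddness is recovered from the oddness of the $f_i$: in any representation $f_i = P_i(g_1, \ldots, g_s)$, odd-degree $g_j$'s must participate nontrivially, so one can pass to an odd-degree subtuple at the cost of a bounded increase in $s$ while preserving the other properties.

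With $\ul{g}$ in hand, I would invoke the companion density theorem: since $\ul{g}$ consists of odd degree forms of sufficiently high strength over a Birch field of characteristic zero, $V(\ul{g})(K)$ is Zariski dense in $V(\ul{g})$. Combining this with $V(\ul{g}) \subseteq Z$ yields
\begin{displaymath}
\overline{Z(K)} \supseteq \overline{V(\ul{g})(K)} = V(\ul{g}),
\end{displaymath}
and since $V(\ul{g})$ has codimension at most $s$ in $V$ with $s$ depending only on $\ul{d}$, the theorem follows by taking $C_{\ref*{codim-gen}}(\ul{d}) = s$.

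The main obstacle I anticipate is the odd-degree strength reduction in the first step. Standard strength-collapse is parity-blind, and applied naively it will produce generators of mixed parity, which are not directly amenable to Birch's theorem or to the density result. Overcoming this requires either a parity-respecting variant of the collapse, or strengthening the density result to tolerate a bounded number of even-degree generators --- which might in turn be eliminated by a further high-strength substitution using the odd-degree part of $\ul{g}$. Either route keeps the overall architecture (strength reduction, then density, then trivial codimension bound) intact, with the work concentrated in making the parity bookkeeping compatible with the quantitative bounds from strength theory.
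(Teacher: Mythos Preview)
Your proposal is correct and follows exactly the paper's approach: regularize to a high-strength odd-degree system $\ul{g}$ with $V(\ul{g}) \subseteq Z$, apply Theorem~\ref{main} to get density of $V(\ul{g})(K)$ in $V(\ul{g})$, and read off the codimension bound from the number of $g_j$'s. The obstacle you worry about is not a real one: the parity-respecting regularization is already in Schmidt \cite{Schmidt} (stated here as Proposition~\ref{prop:reg}), and the mechanism is simply that if $f=\sum_i g_i h_i$ with $\deg f$ odd then exactly one of $g_i,h_i$ has odd degree and $f$ lies in the ideal generated by those odd-degree factors alone---so one runs the collapse keeping only the odd factor at each step, with no extra cost and no need to tolerate even-degree generators in the density theorem.
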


To continue, we require the idea of strength of a polynomial, or a collection of polynomials (see \S \ref{s:bg}). This notion was first introduced by Schmidt \cite{Schmidt} (and is therefore sometimes called Schmidt rank), and was subsequently rediscovered by Green--Tao \cite{GT} and Ananyan--Hochster \cite{AH}. Schmidt's results in fact imply Theorem~\ref{codim-gen} for $K=\Q,$ by completely different methods. One of the main themes of the work of Ananyan--Hocshter is that if $\ul{f}$ has high collective strength then $f_1, \ldots, f_r$ behave like independent variables. Our second result is another instance of this theme:

\begin{theorem}\label{main}
The set $Z(K)$ is Zariski dense in $Z$, provided that the collective strength of $\ul{f}$ is at least $C_{\ref*{main}}(\ul{d})$.
\end{theorem}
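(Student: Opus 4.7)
The plan is to find, for any homogeneous $g \in K[V]$ not vanishing on $Z$, a positive-dimensional $K$-linear subspace $W \subset V$ with $W \subset Z$ and $g|_W \not\equiv 0$. Since $K$ is infinite, such a $W$ then contains a $K$-point outside the zero locus of $g$, establishing density. This reduces Zariski density of $Z(K)$ in $Z$ to the construction of suitable $K$-rational linear subspaces, which I will produce by parametrizing them and applying Theorem~\ref{codim-gen}.

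First, I reduce to the case $\deg g$ odd. Under the high-strength hypothesis, results of Ananyan--Hochster (and subsequent refinements) ensure that $(f_1,\ldots,f_r)$ is a prime ideal and $K[V]/(\ul f)$ is a domain; in particular $g \notin (\ul f)$. Multiplying $g$ by a generic $K$-linear form $\ell \notin (\ul f)$ if necessary, I may assume $\deg g$ is odd: $g\ell$ still vanishes on $Z(K)$, remains nonzero modulo $(\ul f)$ by primality, and a $K$-point on which $g\ell$ is nonzero is also one on which $g$ is nonzero.

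Next, I parametrize $m$-dimensional subspaces of $V$ by tuples $(v_1, \ldots, v_m) \in V^m$. The condition $\sp(v_1, \ldots, v_m) \subset Z$ is equivalent to the vanishing, as a polynomial in $\ul t$, of each $f_i(\sum_j t_j v_j)$, giving a system of homogeneous forms $\{F_{i,\alpha}\}$ on $V^m$ of odd degrees $d_i$. Let $\cF \subset V^m$ be the variety they cut out. Similarly, $g|_{\sp(\ul v)} \not\equiv 0$ translates into the non-vanishing of at least one coefficient form $G_\beta$ of degree $\deg g$. Applying Theorem~\ref{codim-gen} to the odd-degree system $\{F_{i,\alpha}\}$ bounds the codimension of $\ol{\cF(K)}$ in $V^m$ by a constant depending on $\ul d$, $m$, and $\deg g$.

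The main obstacle is then to force $\ol{\cF(K)}$ outside the locus $B \subset V^m$ cut out by the $G_\beta$, i.e., the tuples with $g|_{\sp(\ul v)} \equiv 0$. Two ingredients are needed. First, a geometric statement that $\cF \not\subset B$: this should follow from strength-preservation under generic linear substitution, which under high strength of $\ul f$ yields an $m$-dimensional $\ol K$-linear subspace of $Z$ on which $g$ does not identically vanish. Second, a codimension comparison in which $m$ is chosen sufficiently large in terms of $\ul d$ and $\deg g$, so that the codimension of $\ol{\cF(K)}$ forced by Theorem~\ref{codim-gen} is strictly smaller than the codimension of $\cF \cap B$ in $V^m$. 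I expect this second step to be the crux, and to closely parallel the density argument of \cite{BDS} for Brauer fields, with adjustments for the odd-degree setting.
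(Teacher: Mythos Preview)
Your plan has a structural circularity: in this paper Theorem~\ref{codim-gen} is \emph{deduced from} Theorem~\ref{main} via regularization (Proposition~\ref{prop:reg}), so invoking Theorem~\ref{codim-gen} inside a proof of Theorem~\ref{main} is not available. Unless you supply an independent proof of Theorem~\ref{codim-gen} over an arbitrary Birch field (Schmidt's methods give this only for $K=\Q$), the argument does not get off the ground.

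Even setting circularity aside, the step you flag as ``the crux'' faces a genuine obstacle, not just a gap to be filled. The system $\{F_{i,\alpha}\}$ on $V^m$ has length $\sum_i \binom{m+d_i-1}{d_i}$, so the codimension bound coming from Theorem~\ref{codim-gen} is $C_{\ref*{codim-gen}}$ evaluated at a degree tuple whose length grows polynomially in $m$. There is no reason this should stay below $\codim_{V^m}(\cF\cap B)$ as $m\to\infty$; both quantities increase with $m$, and nothing in your outline controls their relative growth. Moreover, your choice of $m$ visibly depends on $\deg g$, which would force the strength threshold $C_{\ref*{main}}(\ul d)$ to depend on $g$ as well --- but the theorem asserts a bound depending only on $\ul d$. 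The paper avoids all of this by an inductive scheme on $\ul d$: under $\Sigma^*(\ul d)$ it proves a multi-homogeneous density statement (Proposition~\ref{prop:multi}), uses it to build $f_i$-orthogonal subspaces ``all at once'' in Birch's sense (Proposition~\ref{prop:ortho}), reduces to diagonal forms, and finally specializes $\ul f$ to a normal form $x_iy_i^{d_i-1}+a_iy_i^{d_i}+b_iz_i^{d_i}+h_i(w)$ on a subspace $V'$ for which $Z\cap V'$ is rational and irreducible, so its $K$-points are automatically dense. No linear subspace of $Z$ is ever produced, and no appeal to Theorem~\ref{codim-gen} is made.
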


We note that $\ul{f}$ has high collective strength if and only if the singular locus of $Z$ has high codimension (Proposition~\ref{prop:rk-sing}), so one could also phrase Theorem~\ref{main} using this hypothesis instead. We also note that the theorem ensures that there is a smooth $K$-point on $Z$; the existence of such a point is used in many applications in analytic number theory, see, for example, \cite{Birch2} and \cite{Skinner}.

We observe one other simple corollary of this theorem here.

\begin{corollary} \label{cor:main}
Let $d \ge 1$ be an odd integer. Then any equation
\begin{displaymath}
a_1 x_1^d + \cdots + a_n x_n^d = 1
\end{displaymath}
with $a_1, \ldots, a_n \in K$ non-zero and $n \ge C_{\ref*{cor:main}}(d)$ has a solution in $K$.
\end{corollary}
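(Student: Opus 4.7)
The plan is to reduce to Theorem~\ref{main} by homogenizing the equation. Specifically, I introduce the auxiliary form
\[
F(x_1,\ldots,x_n,y) = a_1 x_1^d + \cdots + a_n x_n^d - y^d,
\]
an odd-degree form in $n+1$ variables, and let $Z \subset \A^{n+1}$ be its zero locus. A $K$-point of $Z$ with $y \neq 0$ yields $(x_1/y,\ldots,x_n/y) \in K^n$ solving $a_1 x_1^d + \cdots + a_n x_n^d = 1$, so it suffices to produce such a point.

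The key step is to verify that $F$ has strength at least $C_{\ref*{main}}(d)$ once $n$ is taken sufficiently large (depending only on $d$). Since $K$ has characteristic zero, the partial derivatives $\partial F/\partial x_i = d a_i x_i^{d-1}$ and $\partial F/\partial y = -d y^{d-1}$ (with all $a_i \neq 0$) vanish simultaneously only at the origin. Thus the singular locus of $Z$ is $\{0\}$, of codimension $n+1$ in $\A^{n+1}$. Invoking Proposition~\ref{prop:rk-sing}---which says the strength of $\ul{f}$ is comparable to the codimension of the singular locus of $Z$---we conclude $\str(F)$ grows without bound as $n \to \infty$, and in particular exceeds $C_{\ref*{main}}(d)$ once $n$ is sufficiently large in terms of $d$.

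At this point I apply Theorem~\ref{main} to the single form $F$: the set $Z(K)$ is Zariski dense in $Z$. Because $F$ is not a multiple of $y$, the intersection $Z \cap \{y=0\}$ is a proper closed subvariety of $Z$. A Zariski dense subset of $Z$ cannot be contained in a proper closed subvariety, so some $(x_1,\ldots,x_n,y) \in Z(K)$ has $y \neq 0$, and rescaling produces the required solution. Choosing $C_{\ref*{cor:main}}(d)$ to be the threshold on $n$ above completes the argument.

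There is no real obstacle in this plan; the only nontrivial point is the strength lower bound, which is immediate from the explicit form of the partial derivatives together with Proposition~\ref{prop:rk-sing}. Everything else is formal bookkeeping transforming the inhomogeneous equation into a homogeneous one amenable to Theorem~\ref{main}.
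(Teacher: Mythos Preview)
Your proof is correct and follows essentially the same route as the paper: homogenize to the form $F=\sum a_i x_i^d - y^d$, bound its strength via Proposition~\ref{prop:rk-sing} (the paper states the explicit bound $\str(F)\ge (n+1)/2$ from the upper-bound clause, whereas you just observe it grows without bound), apply Theorem~\ref{main} to get Zariski density of $Z(K)$, and then pick a $K$-point with nonzero last coordinate and rescale. The only cosmetic difference is that you spell out why $Z\cap\{y=0\}$ is proper in $Z$, while the paper leaves this implicit.
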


\begin{proof}
Let $f_1(x_1, \ldots, x_{n+1})=\sum_{i=1}^n a_i x_i^d - x_{n+1}^d$, which has strength at least $(n+1)/2$ (Proposition~\ref{prop:rk-sing}). Thus if $(n+1)/2>C_{\ref*{main}}(d)$ then $Z(K)$ is dense in $Z$. We can therefore take a $K$-point with $x_{n+1} \ne 0$, and then scale to obtain a solution with $x_{n+1}=1$.
\end{proof}

\begin{remark} \label{rmk:char}
Our main theorems continue to hold in positive characteristic, provided $K$ is infinite and the characteristic is greater than each $d_i$. We have chosen to simply work in characteristic zero throughout this paper to keep the exposition cleaner. The results of Brauer, Birch, and \cite{BDS} all hold in arbitary characteristic. We do not know if our theorems continue to hold in low characteristic.
\end{remark}

\begin{remark}
We say that $K$ is a \defn{Leep--Starr} field if there exists an integer $s \ge 1$ such that $N_K(d)$ is finite for all $d$ not divisible by $s$. Leep and Starr \cite{LS} generalized Theorem~\ref{Birch} to this context. It seems likely that our results (and proofs) generalize to this setting, but we leave this avenue unexplored at present.
\end{remark}

\subsection{Comments on the proofs}

The general approach in this paper is similar to that of \cite{BDS}, however, there is one fundamental problem we must solve. To explain this, let us recall the basic idea in the proof of Brauer's theorem. We sketch the proof of the theorem for a single form of degree $d$, assuming the theorem is already known for arbitrary collections of forms of lesser degree. Thus let $f$ be a degree $d$ form on a vector space $V$. We must find a non-zero vector $v \in V$ such that $f(v)=0$, assuming $\dim(V)$ is sufficiently large.

We say that vectors $v_1, \ldots, v_n$ are \defn{$f$-orthogonal} if
\begin{equation} \label{eq:intro}
f(x_1 v_1+\cdots+x_n v_n) = f(v_1) x_1^d + \cdots + f(v_n) x_n^d
\end{equation}
holds for all $x_1, \ldots, x_n \in K$. We claim that we can find a linearly independent $f$-orthogonal sequence $v_1, \ldots, v_n$, provided $\dim(V) \gg n$. Suppose that we have found $v_1, \ldots, v_{n-1}$. We now look for $v_n$ in a complementary space to the span of $v_1, \ldots, v_{n-1}$. The condition that $v_1, \ldots, v_n$ is $f$-orthogonal amounts to some polynomial equations on $v_n$, all of which have degree $<d$. Thus, by our inductive hypothesis, we can find a non-zero solution to these equations provided that $\dim(V)$ is sufficiently large. This proves the claim.

Now simply take $n \ge N_K(d)$, and choose $x_1, \ldots, x_n \in K$, not all zero, so that \eqref{eq:intro} vanishes; this is possible by the definition of $N_K(d)$. Then $v=x_1 v_1+\cdots+x_n v_n$ is a non-zero vector satisfying $f(v)=0$, as required.

The main results of \cite{BDS} are proved in a similar manner, though the arguments are more intricate. The basic idea is to find an $f$-orthogonal sequence $v_1, \ldots, v_n$ for which each $f(v_i)$ is non-zero, together with some additional conditions, which essentially allows us to replace $f$ with a diagonal form of high strength. We then use a specialized argument to handle that case.

We can now describe the fundamental problem we face. Suppose that $f$ has odd degree $d$. If $v_1, \ldots, v_{n-1}$ is an $f$-orthogonal sequence, then the equations expressing $f$-orthogonality of $v_1, \ldots, v_n$ involve all degrees $<d$, including even degrees. Over a Birch field, we cannot solve even degree equations (in general), so we cannot necessarily extend our orthogonal sequence.

The solution to this problem, which goes back to Birch's proof, is to find the sequence $v_1, \ldots, v_n$ all at once, i.e., not inductively. The equations expressing $f$-orthogonality of this sequence are multi-homogeneous of total degree $d$, and in each equation some $v_i$ appears with odd degree $<d$. This, it turns out, enables the use of an inductive argument. So, in essence, this paper combines the ideas of \cite{BDS} with the ``all at once'' method of Birch.

\subsection{Outline} 

In \S \ref{s:bg}, we review material about strength of polynomials and show how to deduce Theorem~\ref{codim-gen} from Theorem~\ref{main}. In \S \ref{s:multihom}, we prove a multi-homogeneous version of Theorem~\ref{main}, assuming an inductive hypothesis. This is used in \S \ref{s:diag} to reduce Theorem~\ref{main} to a statement about diagonal forms, which is then proved in \S \ref{s:proof-end}. Finally, in \S \ref{s:birch}, we give some examples of Birch fields.


\section{Preliminaries on strength} \label{s:bg}

In this section, we collect various existing results regarding strength which will be needed in the course of the paper. Throughout, $K$ denotes an arbitrary field of characteristic~0.

\subsection{Definitions}

Let $V$ be a finite dimensional $K$-vector space. We write $\cP(V)$ for the space of polynomials on $V$, and $\cP_d(V)$ for the subspace consisting of those polynomials that are homogeneous of degree $d$. If we fix a basis $e_1, \ldots, e_n$ of $V$ then we have $\cP(V)=K[x_1, \ldots, x_n]$. 

We now define the notion of strength in increasing order of generality:
\begin{enumerate}
\item The \defn{strength} of a form $f\in \cP_d(V)$ is the minimal $s$ for which there exists an expression
\begin{displaymath}
f = \sum_{i=1}^s g_ih_i,
\end{displaymath}
where $g_i,h_i\in \cP(V)$ are homogeneous forms of degree $<d$. By convention, non-zero linear forms have infinite strength. The zero form (of any degree) has strength zero.
\item The \defn{strength} of a collection of forms $f_1,\ldots,f_r\in \cP_d(V)$ of the same degree is the minimal strength of a non-trivial linear combination. Note that if the $f_i$'s are linearly dependent then their collective strength is zero.
\item The \defn{strength} of a collection of forms of varying degrees is the minimum strength of the forms in each degree separately.

\end{enumerate}
We write $\str(-)$ to denote any of these notions of strength. In the following results, $\ul{f} = (f_1,\ldots,f_r)$ denotes a sequence of forms degrees $\ul{d} = (d_1,\ldots,d_r)$ on $V$ and $s$ denotes a positive integer.

\subsection{Regularization and Theorem \ref{codim-gen}} \label{ss:reg}

Given two tuples $\ul{d}$ and $\ul{e}$ of positive integers, we define $\ul{d}<\ul{e}$ if $\ul{d}$ is less than $\ul{e}$ lexicographically, after first sorting the tuples to be in order. Concretely, to make $\ul{d}$ smaller, one iteratively replaces one of its entries with a list of strictly smaller numbers. For example, $(3,3,1)<(5,3)$. This is a well-order on (unordered) tuples.

The following regularization procedure allows one to put arbitrary forms in an ideal generated by high strength and is the key to deducing Theorem~\ref{codim-gen} from Theorem~\ref{main}.

\begin{proposition}[Regularization] \label{prop:reg}
Let $\Phi:\bigcup_{r\ge 1} \N^r\to \N$ be any function and suppose the degrees $d_i$ are all odd. Then there exist homogeneous forms $\ul{g}=(g_1, \ldots, g_R)$ on $V$ of odd degrees $\ul{e}=(e_1, \ldots, e_R)$ such that:
\begin{enumerate}
\item $R \le C_{\ref*{prop:reg}}(\ul{d},\Phi)$, and $\ul{e} \le \ul{d}$ in the order defined above,
\item $\str(\ul{g}) > \Phi(\ul{e})$ and
\item each $f_i$ belongs to the ideal $(g_1, \ldots, g_R)$.
\end{enumerate}
\end{proposition}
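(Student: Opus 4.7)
The plan is well-founded induction on $\ul{d}$ in the order defined above, mimicking the standard regularization argument for strength but carefully tracking parity. If $\str(\ul{f}) > \Phi(\ul{d})$, we simply take $\ul{g} = \ul{f}$ and are done. Otherwise, by the definition of collective strength for a mixed-degree collection, some odd degree $d$ appears in $\ul{d}$ and there is a non-trivial $K$-linear combination $g = \sum_{i \in I} c_i f_i$ of the degree-$d$ subcollection of $\ul{f}$ with $\str(g) \le \Phi(\ul{d})$; write $g = \sum_{j=1}^s u_j v_j$ with $s \le \Phi(\ul{d})$ and $\deg u_j, \deg v_j < d$ (allowing $s=0$ if $g=0$, which covers the linearly dependent case).

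The key parity observation is that, since $d = \deg u_j + \deg v_j$ is odd, exactly one of $\deg u_j, \deg v_j$ is odd for each $j$; relabel so that $\deg u_j$ is the odd one. Then $g \in (u_1, \ldots, u_s)$, and picking $k \in I$ with $c_k \ne 0$ gives
\[
f_k = c_k^{-1}\Bigl(g - \sum_{i \in I \setminus \{k\}} c_i f_i\Bigr) \in (u_1, \ldots, u_s) + \bigl(\{f_i\}_{i \ne k}\bigr).
\]
Replacing $f_k$ in $\ul{f}$ by the odd-degree factors $u_1, \ldots, u_s$ yields a new odd-degree tuple $\ul{f}'$ whose degree multiset is obtained from $\ul{d}$ by replacing the single entry $d$ with the strictly smaller odd integers $\deg u_1, \ldots, \deg u_s$. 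In particular $\deg(\ul{f}') < \ul{d}$ in the well-order of the excerpt. By induction, $\ul{f}'$ admits a regularization $\ul{g}$ of odd degrees $\ul{e} \le \deg(\ul{f}')$ with $\str(\ul{g}) > \Phi(\ul{e})$ and with every entry of $\ul{f}'$ lying in $(\ul{g})$. The ideal inclusion above then places $f_k$, and hence every $f_i$, in $(\ul{g})$ as well, completing the induction.

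The bound $R \le C_{\ref*{prop:reg}}(\ul{d}, \Phi)$ is extracted by unrolling the recursion: each step increases the length of the tuple by at most $\Phi(\ul{d}) - 1$, and well-foundedness of the order forces termination in finitely many steps, the total count being bounded in terms of $\ul{d}$ and $\Phi$. The main obstacle is preserving odd-degreeness through the regularization; the naive approach of replacing $f_k$ by all factors $u_j, v_j$ fails because some $v_j$ will have even degree. The resolution is exactly the parity observation above: since an odd-degree form decomposed as a sum of products $u_j v_j$ of smaller-degree forms automatically has an odd-degree factor in each summand, retaining only those factors already suffices to capture $g$ (and thus $f_k$) in the generated ideal.
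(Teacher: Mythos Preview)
Your proof is correct and is precisely the standard regularization argument (as in the references the paper cites, \cite{Schmidt} and \cite{ESS}), with the parity observation---that in any decomposition $g=\sum u_jv_j$ of an odd-degree form each summand has exactly one odd-degree factor---being the one extra ingredient needed to keep the procedure within odd degrees. The paper itself does not spell out the argument; your write-up fills in exactly the expected details, and the uniform bound on $R$ follows as you indicate by inducting on $\ul{d}$ in the well-order and noting that only finitely many smaller tuples $\ul{d}'$ can arise at each step.
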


\begin{proof}
This was first proved in \cite[\S 2]{Schmidt}. See also \cite[Proposition~8.1]{ESS}.
\end{proof}

\begin{proof}[Proof of Theorem~\ref{codim-gen}, assuming Theorem \ref{main}]
Apply Proposition~\ref{prop:reg} with $\Phi(\ul{e})=C_{\ref*{main}}(\ul{e})$ and let $Y$ be the variety defined by the vanishing of the resulting forms $\ul{g}.$ Note that $Y\subset Z.$ By Theorem \ref{main}, $Y(K)$ is Zariski dense in $Y.$ This implies that the Zariski closure of $Z(K)$ contains $Y,$ so it has codimension at most $R\le C_{\ref*{prop:reg}}(\ul{d},\Phi).$ 
\end{proof}

\subsection{Field extensions}

The strength of a form can decrease upon passing to a field extension. For example, $x^2+y^2$ has strength~2 over the real numbers, but strength~1 over the complex numbers. We define the \defn{absolute strength} of a form (or collection of forms), denoted $\astr(-)$, to be the strength over the algebraic closure of $K$. We will require the following result, which essentially says that strength does not drop too much when passing to field extensions.

\begin{proposition} \label{rk-bar}
If $\str(\ul{f}) \ge C_{\ref*{rk-bar}}(\ul{d},s)$ then $\astr(\ul{f}) \ge s$.
\end{proposition}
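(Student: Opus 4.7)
My plan is to route the proof through the codimension of the singular locus of $Z = Z(\ul f)$, which is a geometric invariant insensitive to base change. The main input is Proposition~\ref{prop:rk-sing}, which gives a two-sided comparison between collective strength and $\codim \textnormal{Sing}(Z)$, with bounds depending only on $\ul d$. Strength itself is not invariant under base change (as the example $x^2+y^2$ already illustrates), but Jacobian codimension is, so this is exactly the device that converts a ``strength over $K$'' lower bound into a ``strength over $\bar K$'' lower bound.

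Concretely, the singular locus $\textnormal{Sing}(Z) \subset V$ is cut out by polynomial equations over $K$: the $f_i$ themselves together with the appropriate minors of the Jacobian of $\ul f$. In characteristic zero these define a scheme whose base change to $\bar K$ is $\textnormal{Sing}(Z_{\bar K})$, so $\codim_{V} \textnormal{Sing}(Z) = \codim_{V_{\bar K}} \textnormal{Sing}(Z_{\bar K})$. Now apply Proposition~\ref{prop:rk-sing} in the direction ``high strength forces high codimension'' over $K$: assuming $\str(\ul f) \ge C_{\ref*{rk-bar}}(\ul d, s)$ for $C_{\ref*{rk-bar}}$ chosen sufficiently large, we obtain $\codim \textnormal{Sing}(Z) \ge N$ for a prescribed $N = N(\ul d, s)$. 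By the base-change invariance above, the same codimension bound holds over $\bar K$. Finally, apply the converse direction of Proposition~\ref{prop:rk-sing} over $\bar K$: a sufficiently large codimension forces $\astr(\ul f) \ge s$, provided $N$ was chosen large enough as a function of $\ul d$ and $s$.

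The main obstacle is quantitative bookkeeping: Proposition~\ref{prop:rk-sing} must be invoked in both directions with explicit, composable bounds so that $C_{\ref*{rk-bar}}$ can be defined by ``inverting'' one bound and plugging it into the other. The easy half, that low strength forces low codimension, is direct: a decomposition $f = \sum g_i h_i$ confines $\textnormal{Sing}(Z_f)$ to the common vanishing of the $g_i$ and $h_i$, whose codimension is bounded linearly in the number of summands. The deep half, ``large codimension forces large strength,'' is an Ananyan--Hochster-type statement that relies on characteristic zero (or sufficiently large characteristic), which matches the standing hypothesis of \S\ref{s:bg}; this is what makes the overall argument rest on genuinely recent input rather than being elementary descent.
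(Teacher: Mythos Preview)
Your argument is circular within the paper's logical structure. You invoke Proposition~\ref{prop:rk-sing} in the direction ``high strength over $K$ forces high codimension of the degeneracy locus,'' but look at the paper's proof of that direction: the Ananyan--Hochster bound is quoted only for algebraically closed $K$, and the general case is deduced \emph{using Proposition~\ref{rk-bar}}. So you are assuming what you want to prove. Indeed, the implication ``high $\str$ over $K$ $\Rightarrow$ high codimension of $X$,'' together with base-change invariance of codimension and the easy inequality $\codim X \le 2\,\astr(\ul f)$, is equivalent up to constants to Proposition~\ref{rk-bar} itself; routing through the singular locus does not bypass the difficulty, it only rephrases it. The paper does not attempt a self-contained argument here and simply cites \cite{BDLZ}.

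There is also some confusion in your final paragraph. You label ``large codimension forces large strength'' as the deep half, but that is the contrapositive of what you just called the easy half (``low strength forces low codimension''). The genuinely hard direction is the other one, ``high strength forces high codimension,'' and over a non-closed field this is hard precisely because one must first control $\astr$ in terms of $\str$. Relatedly, your explanation of the easy half has the containment reversed: a decomposition $f=\sum_{i=1}^t g_i h_i$ shows $V(g_1,\ldots,g_t,h_1,\ldots,h_t)\subset\{\nabla f=0\}$, not that the singular locus is confined to this vanishing set; this is what yields $\codim\{\nabla f=0\}\le 2t$.
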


\begin{proof}
This is proved in \cite{BDLZ}. (We note that \cite{BDLZ} also applies in positive characteristic greater than the $d_i$'s, while \cite{BDS2} proves a similar result in arbitrary characteristic.)
\end{proof}

\subsection{Geometric properties of strength}

The next result relates strength to singularities.

\begin{proposition} \label{prop:rk-sing}
Let $X \subset V$ be the locus where $\nabla f_1(x), \ldots, \nabla f_r(x)$ are linearly dependent. If $\str(\ul{f}) \ge C_{\ref*{prop:rk-sing}}(\ul{d},s)$ then the codimension of $X$ is at least $s$. On the other hand, the codimension is always at most $2\cdot \str(\ul{f}).$ 
\end{proposition}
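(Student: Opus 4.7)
For the upper bound $\codim(X)\le 2\str(\ul f)$, the plan is to exhibit an explicit low-codimension subvariety on which the gradients become linearly dependent. Let $t=\str(\ul f)$ and pick a nonzero combination $g=\sum_i c_i f_i$ of forms of some common degree having strength exactly $t$, writing $g=\sum_{j=1}^t u_j v_j$ with $u_j,v_j$ of strictly smaller degree. Fix $i_0$ with $c_{i_0}\ne 0$, so that $\nabla f_{i_0}$ is a $K$-linear combination of $\nabla g$ and the remaining $\nabla f_i$'s. On the common zero locus $Y$ of the $2t$ forms $u_1,v_1,\ldots,u_t,v_t$---a subvariety of codimension at most $2t$---the gradient $\nabla g$ vanishes, and hence $\nabla f_1,\ldots,\nabla f_r$ are linearly dependent on $Y$. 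Thus $Y\subseteq X$ and $\codim(X)\le 2t$.

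For the lower bound, the plan is an incidence/projection argument. Consider
\[
\widetilde X = \Bigl\{(x,[c])\in V\times\P^{r-1}:\sum_{i=1}^r c_i\nabla f_i(x)=0\Bigr\},
\]
whose image under the projection to $V$ is exactly $X$. It therefore suffices to bound $\codim(\widetilde X)\ge s+r-1$ in $V\times\P^{r-1}$, and for this one bounds the codimension of each fiber $\widetilde X_c\subseteq V$ over a nonzero $c\in\ol K^r$. In the case where all $f_i$ share a common degree $d$, $\widetilde X_c$ is simply the singular locus of the nonzero form $g_c=\sum_i c_i f_i$, and one can invoke the classical single-form statement: a form of degree $d$ whose absolute strength exceeds some $C'(d,t)$ has singular locus of codimension at least $t$. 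Combined with Proposition~\ref{rk-bar} (bridging $\str$ and $\astr$) and the inequality $\str(g_c)\ge\str(\ul f)$ coming from the definition of collective strength, this forces $\codim(\widetilde X_c)\ge s+r-1$ for every $c$, and hence $\codim(X)\ge s$.

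The main obstacle is the varying-degree case, in which $g_c$ is not homogeneous and $\widetilde X_c$ need not be a cone, so the single-form singular-locus bound does not apply to $g_c$ itself. To handle this, I would exploit the weighted $\mathbb G_m$-action $x\mapsto\lambda x$ on $V$---which lifts to $\widetilde X$ and induces the weighted action $[c_i]\mapsto[\lambda^{d_i-1}c_i]$ on the $\P^{r-1}$-factor---to reduce the fiber-dimension estimate to the equal-degree situation by stratifying $\P^{r-1}$ according to the set of degrees in the support of $c$ and arguing one stratum at a time. This bookkeeping is the technical heart of the argument; once it is in place, the rest of the proof is a direct application of the established theory of strength and singular loci.
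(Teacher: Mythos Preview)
Your upper-bound argument is correct and matches the paper's exactly: both pick a nonzero $g\in\sp(\ul f)$ of minimal strength $t$, write $g=\sum_{j=1}^t u_jv_j$, and observe that $V(u_1,\ldots,u_t,v_1,\ldots,v_t)\subset\{\nabla g=0\}\subset X$, whence $\codim X\le 2t$ by Krull.

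For the lower bound, the paper does not argue at all: it simply cites \cite[Theorem~A]{AH} for the algebraically closed case (which already covers collections of varying degrees) and then applies Proposition~\ref{rk-bar} to pass from $\str$ to $\astr$. Your incidence-variety reduction to the single-form bound is a clean alternative in the equal-degree case, though note that since $c$ ranges over $\ol K^r$ you should be writing $\astr(g_c)\ge\astr(\ul f)$ rather than $\str$; the role of Proposition~\ref{rk-bar} is then to convert the hypothesis on $\str(\ul f)$ into one on $\astr(\ul f)$.

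The varying-degree sketch, however, is a genuine gap rather than bookkeeping. Over a point $[c]$ supported in more than one degree, the fiber $\widetilde X_c=\{x:\sum_i c_i\nabla f_i(x)=0\}$ is cut out by \emph{inhomogeneous} equations and is not the critical locus of any homogeneous form, so the single-form bound does not apply to it. Your $\mathbb{G}_m$-action does identify $\widetilde X_c\cong\widetilde X_{c_\lambda}$ along the orbit, but to compare with the limiting fiber $\widetilde X_{c'}$ (supported in one degree) you need upper semicontinuity of fiber dimension on the \emph{target} of $q\colon\widetilde X\to\P^{r-1}$, which is not automatic without properness. Concretely, with $f_1$ linear and $f_2$ quadratic, the fiber over $[1:1]$ is the affine translate $\{\nabla f_2(x)=-\nabla f_1\}$, which neither decomposes by degree nor is a cone. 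One can salvage the argument by a genuine degeneration: the one-parameter family over $\A^1_\lambda$ with $c_{\lambda,i}=\lambda^{d_{\max}-d_i}c_i$ is weighted-homogeneous in $(x,\lambda)$, so each component meets $\lambda=0$, and fiber-dimension for dominant maps from irreducible varieties then gives $\dim\widetilde X_c\le\dim\widetilde X_{c^{\mathrm{top}}}$. But this is an actual argument that needs to be written; the stratification alone does not reduce to the equal-degree case.
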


\begin{proof}
When $K$ is algebraically closed, the lower bound is \cite[Theorem A]{AH}. We can deduce the general case using Proposition~\ref{rk-bar}. For the upper bound, suppose $0\neq f\in\sp(\ul{f}) $ is homogeneous and $f = \sum_{i=1}^t g_i h_i.$ Then 
\begin{displaymath}
V(g_1,\ldots,g_t, h_1,\ldots,h_t) \subset (\nabla f = 0)
\end{displaymath}
so by Krull's principal ideal theorem $\codim_{\A^n} (\nabla f = 0) \le 2t.$
\end{proof}

\begin{remark}
For quantitative results, see \cite{KLP}.
\end{remark}

\begin{corollary} \label{prime}
If $\str(\ul{f}) \ge C_{\ref*{prime}}(\ul{d})$ then $f_1,\ldots,f_r$ is a prime sequence, i.e., $f_1, \ldots, f_i$ generate a prime ideal for each $1 \le i \le r$.
\end{corollary}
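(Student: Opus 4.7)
The plan is to prove by induction on $i$ that $(f_1, \ldots, f_i)$ is prime for every $1 \le i \le r$, choosing the strength threshold $C_{\ref*{prime}}(\ul d)$ large enough that all the inequalities below hold simultaneously. First I would reduce to $K = \bar K$: by Proposition~\ref{rk-bar}, large strength over $K$ implies large strength over the algebraic closure, and since $\bar K[x_1,\ldots,x_n]$ is faithfully flat over $K[x_1,\ldots,x_n]$, the ideal generated by any subsequence of $\ul f$ is prime in $K[x_1,\ldots,x_n]$ whenever its extension is prime in $\bar K[x_1,\ldots,x_n]$ (a quotient that embeds into an integral domain is itself an integral domain).

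For the base case $i=1$, any factorization $f_1 = gh$ with $g,h$ non-constant is exactly an expression of strength $1$, so $\str(f_1) \ge 2$ already forces $f_1$ to be irreducible. For the inductive step, I would first argue that $f_i \notin (f_1,\ldots,f_{i-1})$: otherwise, extracting the homogeneous degree-$d_i$ component of such a relation would yield
\begin{displaymath}
f_i - \sum_{j<i,\, d_j=d_i} c_j f_j \;=\; \sum_{j<i,\, d_j<d_i} h_j f_j
\end{displaymath}
with $c_j \in \bar K$ scalars and $h_j$ homogeneous of positive degree, which is a nontrivial degree-$d_i$ linear combination of $\ul f$ of strength at most $i-1 \le r-1$, contradicting large collective strength. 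Combined with primeness of $(f_1,\ldots,f_{i-1})$, this makes $f_1,\ldots,f_i$ a regular sequence, so $X_i := V(f_1,\ldots,f_i) \subset \A^n$ is a codimension-$i$ complete intersection and its coordinate ring is Cohen--Macaulay, hence satisfies Serre's $S_2$.

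The condition $R_1$ comes from Proposition~\ref{prop:rk-sing} applied to the subcollection $(f_1,\ldots,f_i)$, whose collective strength is at least $\str(\ul f)$ since restricting the collection only removes potential linear combinations over which the minimum is taken. Choosing $C_{\ref*{prime}}(\ul d)$ to exceed $\max_{1 \le i \le r} C_{\ref*{prop:rk-sing}}((d_1,\ldots,d_i), i+2)$ (and the corresponding $C_{\ref*{rk-bar}}$), the locus where $\nabla f_1, \ldots, \nabla f_i$ are linearly dependent has codimension $\ge i+2$ in $\A^n$; the singular locus of $X_i$ sits inside this dependency locus, hence has codimension $\ge 2$ within $X_i$. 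By Serre's criterion, $\bar K[x]/(f_1,\ldots,f_i)$ is a finite product of normal domains. Since $X_i$ is cut out by homogeneous forms it is a cone through the origin, every irreducible component contains the origin, and so $X_i$ is connected; thus the product has a single factor and the ideal is prime.

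I expect the main obstacle to be bookkeeping: arranging a single threshold $C_{\ref*{prime}}(\ul d)$, depending only on $\ul d$, that simultaneously supports Propositions~\ref{rk-bar} and~\ref{prop:rk-sing} at every level of the induction, together with the verification that passing to subcollections preserves the strength bounds in the direction we need.
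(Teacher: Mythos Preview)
Your argument is correct and is precisely the standard one the paper has in mind: the corollary is stated immediately after Proposition~\ref{prop:rk-sing} without a separate proof, and the intended derivation is exactly via Serre's criterion---$S_2$ from the complete-intersection property, $R_1$ from the high codimension of the dependency locus, and irreducibility from the cone structure---together with the reduction to $\bar K$ via Proposition~\ref{rk-bar}. This is also the argument in \cite{AH}, to which the paper defers for Proposition~\ref{prop:rk-sing}.
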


Here is a related result which will also be useful.

\begin{proposition} \label{prop:surjective}
If $\str(\ul{f})\ge C_{\ref*{prop:surjective}} (\ul{d})$ then $\ul{f}:V\to \A^r$ is surjective (as a map of varieties) with geometrically irreducible fibers.
\end{proposition}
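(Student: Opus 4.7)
The plan is to homogenize each fiber in one extra variable so that Corollary~\ref{prime} applies. The two properties to prove (surjectivity and geometric irreducibility of fibers) are both geometric, so after invoking Proposition~\ref{rk-bar} to replace $K$ by $\overline{K}$ (and enlarging $C_{\ref*{prop:surjective}}(\ul{d})$ accordingly), we may assume $K$ is algebraically closed. It then suffices to show that for every $c \in K^r$ the fiber $V_c := \{x \in V : f_i(x) = c_i \text{ for all } i\}$ is non-empty and irreducible.

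Fix such a $c$ and introduce the forms $\tilde f_i(x, y) := f_i(x) - c_i y^{d_i} \in \cP_{d_i}(V \oplus K)$, homogeneous of the same degrees as the $f_i$. The key observation is that $\str(\ul{\tilde f}) \ge \str(\ul f)$: any linear combination $\tilde f = \sum \lambda_i \tilde f_i$ of same-degree components restricts at $y = 0$ to the corresponding combination $f = \sum \lambda_i f_i$, and any expression $\tilde f = \sum_{j=1}^s p_j q_j$ as a sum of products of lower-degree forms specializes accordingly to one for $f$. Choosing $C_{\ref*{prop:surjective}}(\ul d) \ge C_{\ref*{prime}}(\ul d)$, Corollary~\ref{prime} applied to $\ul{\tilde f}$ shows that $\tilde f_1, \ldots, \tilde f_r$ is a prime sequence; hence $W := V(\tilde f_1, \ldots, \tilde f_r) \subset V \oplus K$ is irreducible of codimension exactly $r$. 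Applying Corollary~\ref{prime} to $\ul f$ itself likewise yields $\dim V(\ul f) = \dim V - r$.

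To finish, the dehomogenization $(x, y) \mapsto (x/y, y)$ identifies the open locus $W \cap \{y \ne 0\}$ with $V_c \times (\A^1 \setminus \{0\})$, so it suffices to show $W \cap \{y \ne 0\}$ is non-empty. Its complement $W \cap \{y = 0\}$ equals $V(\ul f) \times \{0\}$, of dimension $\dim V - r$, strictly less than $\dim W = \dim V + 1 - r$. Hence $W \cap \{y \ne 0\}$ is a non-empty open subset of the irreducible $W$, so it is irreducible, and therefore $V_c$ is non-empty and irreducible. This yields both surjectivity and geometric irreducibility of fibers. The only substantive step beyond invoking Corollary~\ref{prime} is the strength-preservation inequality under the homogenization trick; once that is in hand, the rest is a standard homogenization-dehomogenization maneuver.
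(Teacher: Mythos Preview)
Your argument is correct and takes a genuinely different route from the paper's. The paper's proof is a two-line citation: the algebraically closed case is \cite[Theorem~1.11]{KZ-applications}, and the general case follows via Proposition~\ref{rk-bar}. You instead give a self-contained derivation from Corollary~\ref{prime} by homogenizing each fiber: the key input is the easy inequality $\str(\ul{\tilde f}) \ge \str(\ul f)$ (specialize $y=0$), after which primeness of the ideal $(\tilde f_1,\ldots,\tilde f_r)$ and a dimension count for $W\cap\{y=0\}$ force $W\cap\{y\ne 0\}\cong V_c\times(\A^1\setminus\{0\})$ to be non-empty and irreducible. This buys independence from the Kazhdan--Ziegler reference at the cost of a short extra argument; since Corollary~\ref{prime} is already available in the paper (via \cite{AH} and Proposition~\ref{rk-bar}), your route keeps the logical dependencies entirely internal. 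One small point you leave implicit: that a prime sequence in the sense of Corollary~\ref{prime} has the expected codimension. This follows because high strength forces $f_i\notin(f_1,\ldots,f_{i-1})$ (otherwise some nontrivial same-degree combination would lie in the ideal generated by lower-degree $f_j$'s, bounding its strength by $r-1$), so the chain of primes is strict and Krull's height theorem gives codimension exactly $r$.
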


\begin{proof}
For $K$ which is algebraically closed, this is \cite[Theorem 1.11]{KZ-applications}. For the general case, combine that result with Proposition~\ref{rk-bar}.
\end{proof}

\subsection{Generic forms have high strength}

The following result makes this precise.

\begin{proposition} \label{prop:str-gen}
If $\dim{V} \ge C_{\ref*{prop:str-gen}}(\ul{d}, s)$ then there is a non-empty Zariski open subset $U$ of $\prod_{i=1}^r \cP_{d_i}(V)$ such that any $K$-point $\ul{f}$ of $U$ satisfies $\str(\ul{f})>s$.
\end{proposition}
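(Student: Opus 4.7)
The plan is a routine dimension count showing that the subset $B \subseteq \prod_i \cP_{d_i}(V)$ of tuples $\ul{f}$ with $\str(\ul{f}) \le s$ is contained in a proper closed subvariety once $\dim V$ is large in terms of $\ul{d}$ and $s$. The conclusion then follows because $K$ is infinite (characteristic zero) and any non-empty Zariski open in affine space over an infinite field has a $K$-point.

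To set up the count, group the entries of $\ul{d}$ by value: say $d$ occurs $r_d$ times, with corresponding forms $f_{d,1}, \ldots, f_{d, r_d}$ among $\ul{f}$. By definition, $\str(\ul{f}) \le s$ iff for some $d$ there exist $\lambda \in K^{r_d} \setminus \{0\}$, an integer $0 \le t \le s$, and homogeneous $g_i, h_i$ of positive degrees $a_i, b_i < d$ with $a_i + b_i = d$ (for $i = 1, \ldots, t$) such that $\sum_j \lambda_j f_{d, j} = \sum_{i=1}^t g_i h_i$. For each fixed $d$ and fixed data $\ul{a} = (a_i, b_i)_{i=1}^t$, the corresponding locus $B_{d, \ul{a}}$ is the image of a morphism from
\[
\P^{r_d - 1} \times \prod_{i=1}^t \bigl(\cP_{a_i}(V) \times \cP_{b_i}(V)\bigr) \times \cP_d(V)^{r_d - 1} \times \prod_{d' \ne d} \cP_{d'}(V)^{r_{d'}},
\]
where the last two factors comprise the ambient space $\prod_i \cP_{d_i}(V)$ with a single copy of $\cP_d(V)$ removed; the missing coordinate $f_{d, j_0}$ (for an index $j_0$ with $\lambda_{j_0} \ne 0$) is reconstructed from the defining relation. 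Consequently, $B_{d, \ul{a}}$ is properly contained in $\prod_i \cP_{d_i}(V)$ provided
\[
(r_d - 1) + \sum_{i=1}^t \bigl(\dim \cP_{a_i}(V) + \dim \cP_{b_i}(V)\bigr) < \dim \cP_d(V).
\]

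With $n = \dim V$, one has $\dim \cP_d(V) = \binom{n + d - 1}{d}$ of order $n^d$, while each $\dim \cP_{a_i}(V) + \dim \cP_{b_i}(V)$ is of order at most $n^{d-1}$ since $a_i, b_i < d$. The displayed inequality therefore holds as soon as $n$ exceeds some constant depending on $\ul{d}$ and $s$. The edge case $d = 1$ (where no valid $(a_i, b_i)$ exists, so only $t = 0$ contributes) just reduces to linear independence of the $f_{1,j}$'s, which is a proper condition once $n \ge r_1$. Taking the union of the Zariski closures $\ol{B}_{d, \ul{a}}$ over the finite list of $(d, t, \ul{a})$ gives a proper closed subvariety of $\prod_i \cP_{d_i}(V)$; its complement $U$ is the desired non-empty Zariski open. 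A rational point $\ul{f} \in U(K)$ exists because $K$ is infinite, and by construction $\str(\ul{f}) > s$. I do not expect a real obstacle; the only mild bookkeeping wrinkle is the $\P^{r_d - 1}$ factor encoding the freedom to take linear combinations in the definition of collective strength.
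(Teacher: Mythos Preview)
Your proposal is correct and follows essentially the same approach as the paper's proof: both parametrize the low-strength locus and show by a straightforward dimension count (using $\dim \cP_a(V) = O(n^a)$) that it lies in a proper closed subvariety once $n$ is large. The only cosmetic differences are that the paper first reduces to the case where all $d_i$ are equal, and it handles the linear-combination freedom via $r$ affine maps from $\A^{r-1} \times \cP_d(V)^{r-1} \times Y_s$ rather than your single $\P^{r_d-1}$ factor (which implicitly requires working on charts where some $\lambda_{j_0} \ne 0$); also, your remark about the existence of a $K$-point in $U$ is superfluous, since the statement only asks that $U$ be non-empty and that every $K$-point of $U$ have high strength.
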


\begin{proof}
We may assume that the degrees are all the same, i.e., $d_i=d$ for all $i$. Letting $n=\dim{V}$, we have
\begin{displaymath}
\dim(\cP_d(V)^r) = r\cdot\binom{d+n-1}{d} = \frac{r}{d!} \cdot n^d + O(n^{d-1}),
\end{displaymath}
where our implied constants are allowed to depend on all parameters besides $n.$
Let $X_s$ be the strength $\le s$ locus in $\cP_d(V)^r$. To complete the proof, we show that $X_s$ is a constructible set with
\begin{equation} \label{eq:Xdim}
\dim(X_s) = \frac{r-1}{d!} \cdot n^d + O(n^{d-1}).
\end{equation}
Thus, for $n$ sufficiently large, the complement of $X_s$ contains a non-empty open set.

Let $Y_s \subset \cP_d(V)$ be the strength $\le s$ locus.  For a tuple $e = (e_1,\ldots,e_s)$ with $1\le e_i\le d/2$ let
\begin{displaymath}
\phi_e \colon \prod_{i=1}^s \big( \cP_{d-e_i}(V) \times \cP_{e_i}(V) \big) \to \cP_d(V)
\end{displaymath}
be the map given by
\begin{displaymath}
\phi_e(g, h) = \sum_{i=1}^s g_i\cdot h_i.
\end{displaymath}
The dimension of the image of $\phi_e$ is bounded above by
\begin{align*}
\sum_{i=1}^s (\dim(\cP_{d-e_i}(V))+\dim(\cP_{e_i})(V)) =  \sum_{i=1}^s \left(\binom{n+d-e_i-1}{d-e_i} + \binom{n+e_i-1}{e_i}\right) = O(n^{d-1}).
\end{align*}
Since $Y_s$ is the finite union of the images of the $\phi_e$'s, over all choices of $e$, it is a constructible set whose dimension is also $O(n^{d-1})$.

Now, consider the map
\begin{align*}
\A^{r-1} \times \cP_d(V)^{r-1} \times Y_s & \to \cP_d(V)^r \\
((a_1, \ldots, a_{r-1}), (f_1, \ldots, f_{r-1}), g) &\mapsto (f_1, \ldots, f_{r-1}, a_1 f_1+\cdots+a_{r-1} f_{r-1} + g).
\end{align*}
There are $r-1$ other such maps, where the linear combination is put in a different coordinate. The joint images of these maps is $X_s$. We thus see that $X_s$ is constructible and \eqref{eq:Xdim} holds, as required.
\end{proof}

\subsection{The restriction map} \label{ss:res}

It will be useful to phrase the next results using a more abstract definition of strength.

\begin{definition}
    Suppose $W$ is another finite dimensional $K$-vector space and $f \colon V \to W$ is a homogeneous polynomial map of degree $d$. Choose a linear isomorphism $W=K^r$, and let $f_1, \ldots, f_r \in \cP_d(V)$ be the components of $f$. The \defn{strength} of the map $f$ is the collective strength of $f_1, \ldots, f_r$. This is independent of the choice of basis of $W$.

    More generally, suppose that $W=W_1 \times \cdots \times W_d$ is a graded vector space and  $f \colon V \to W$ is a homogeneous algebraic map, meaning its $i$-th component is homogeneous of degree $i$. We define the \defn{strength} of $f$ to be the minimum of the strengths of its homogeneous components.
\end{definition}

Let $\ul{f}=(f_1, \ldots, f_r)$ be homogeneous forms of degrees $\ul{d}=(d_1, \ldots, d_r)$ on a vector space $W$. Given vectors $w_1, \ldots, w_{\ell} \in W$, there is an associated linear map $K^{\ell} \to W$, and we can consider the restriction of $\ul{f}$ to $K^{\ell}$. This defines a homogeneous algebraic map
\begin{displaymath}
\res_{\ul{f}} \colon W^{\ell} \to \prod_{i=1}^r \cP_{d_i}(K^{\ell}),
\end{displaymath}
where the $i$th factor on the right is placed in degree $d_i$. A key property of $\res_{\ul{f}}$ is the following, which is inspired by \cite{KZ-properties}.

\begin{proposition} \label{prop:res-strength0}
We have $\str(\res_{\ul{f}}) \ge \str(\ul{f})$.
\end{proposition}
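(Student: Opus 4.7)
The plan is to show that any non-trivial linear combination of the coordinates of the degree-$d$ part of $\res_{\ul{f}}$, viewed as a form on $W^\ell$, is the pullback along a suitable linear map of a non-trivial linear combination of the $f_i$'s of degree $d$ on $W$. Since strength is non-increasing under precomposition with a linear map (pull back any expression $F = \sum g_k h_k$), and the collective strength of $(f_i)_{d_i = d}$ is at least $\str(\ul{f})$ by definition, this will give the desired bound.

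Concretely, set $S = \str(\ul{f})$ and fix a degree $d$. We need the collective strength of the degree-$d$ coordinates of $\res_{\ul{f}}$ to be at least $S$. If the $f_i$'s with $d_i = d$ are linearly dependent then $S = 0$ and there is nothing to prove, so we assume linear independence. Fix a non-zero functional $\mu = (\mu_i)_{i : d_i = d}$ with $\mu_i \in \cP_d(K^\ell)^\vee$, and let $F$ on $W^\ell$ be the corresponding combination of coordinates. Since $K$ is infinite, $\cP_d(K^\ell)^\vee$ is spanned by point evaluations, so we write $\mu_i = \sum_j \lambda_{i,j} \cdot (\text{evaluation at } y_{i,j})$ with $y_{i,j} \in K^\ell$ and $\lambda_{i,j} \in K$. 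Using $f_i(\sum_k y_k w_k) = \sum_\alpha c_{i,\alpha}(w)\, y^\alpha$, where the $c_{i,\alpha}$ are the monomial coordinates of $\res_{f_i}$, one obtains the explicit formula
\[
F(w_1, \ldots, w_\ell) = \sum_{i,j} \lambda_{i,j}\, f_i\!\left( \sum_k y_{i,j,k}\, w_k \right).
\]

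Next I would specialize along the diagonal. For $t \in K^\ell$, let $\iota_t \colon W \to W^\ell$ be the linear map $v \mapsto (t_1 v, \ldots, t_\ell v)$. Homogeneity of each $f_i$ yields
\[
F(\iota_t(v)) = \sum_{i : d_i = d} P_i(t)\, f_i(v), \qquad P_i(t) := \sum_j \lambda_{i,j}\, \Big( \sum_k y_{i,j,k}\, t_k \Big)^{\! d}.
\]
Expanding the inner power of a linear form gives $[t^\alpha] P_i(t) = \binom{d}{\alpha}\, \mu_i(x^\alpha)$, so $P_i \equiv 0$ if and only if $\mu_i = 0$. Since $\mu \neq 0$, some $P_{i_0}$ is a nonzero polynomial, and because $K$ is infinite we can choose $t \in K^\ell$ with $P_{i_0}(t) \neq 0$. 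By linear independence, $\sum_i P_i(t) f_i$ is then a non-trivial linear combination of the $f_i$'s of degree $d$, and so has strength at least $S$. Since $F \circ \iota_t = \sum_i P_i(t) f_i$, the pullback bound gives $\str(F) \ge \str(F \circ \iota_t) \ge S$.

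The only non-cosmetic step is the specialization: one must exhibit a \emph{single} $t$ for which $\sum_i P_i(t) f_i$ is non-zero. The identity $[t^\alpha] P_i(t) = \binom{d}{\alpha}\, \mu_i(x^\alpha)$ handles this cleanly by identifying the vanishing of each $P_i$ with the vanishing of the corresponding $\mu_i$, so the hypothesis $\mu \neq 0$ immediately furnishes such a $t$ (using that $K$ is infinite).
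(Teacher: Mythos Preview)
Your argument is correct and follows essentially the same route as the paper's proof of the (slightly more general) Proposition~\ref{prop:res-strength}: both specialize along the diagonal $w_k \mapsto t_k v$ (the paper writes $w_i = \lambda_i w$) to turn a linear combination of the coordinates of $\res_{\ul{f}}$ into a linear combination $\sum_i P_i(t)\,f_i$ of the original forms, and both identify $P_i$ with the multinomial-weighted generating function of the original coefficients to see that $P_i\equiv 0$ forces the corresponding coefficients to vanish. Your detour through evaluation functionals to obtain $F=\sum_{i,j}\lambda_{i,j}\,f_i(\sum_k y_{i,j,k}w_k)$ is a cosmetic variation---the paper works directly in the monomial basis $f_i^{\ep}$---but the substance is the same.
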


In particular, if $\str(\ul{f})$ is sufficiently large then $\res_{\ul{f}}$ is a surjective map of varieties by Proposition~\ref{prop:surjective}. Thus, if $K$ is algebraically closed, then $\res_{\ul{f}}$ is surjective on $K$-points, meaning that any tuple of polynomials on $K^{\ell}$ of degrees $\ul{d}$ can be obtained as a specialization of $\ul{f}$. This is essentially\footnote{The result of \cite{KZ-properties} states that the restriction map is surjective when restricted to linearly independent tuples in $W^{\ell}$. One can obtain this stronger statement from what we have proved with a little more work.} the universality result of \cite{KZ-properties}.

We will need a slightly more general result. Let $\ul{f}=(f_1, \ldots, f_r)$ be bihomogeneous forms on $V \times W$, and let $(d_i, e_i)$ be the bidegree of $f_i$. For $v\in V,\ w_1, \ldots, w_{\ell} \in W,$ we obtain a linear map $K^{\ell}\to V\times W$ given by $(x_1,\ldots,x_{\ell})\mapsto (v,\sum_{i=1}^{\ell} x_iw_i).$ This induces a restriction map
\begin{displaymath}
\res_{\ul{f}} \colon V \times W^{\ell} \to \prod_{i=1}^r \cP_{e_i}(K^{\ell})
\end{displaymath}
The following proposition applied with $V=0$ recovers Proposition~\ref{prop:res-strength0}.

\begin{proposition} \label{prop:res-strength}
We have $\str(\res_{\ul{f}}) \ge \str(\ul{f})$.
\end{proposition}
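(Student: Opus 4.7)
The plan is to argue by contraposition. Assume $\str(\res_{\ul{f}}) < s$; we will deduce $\str(\ul{f}) < s$. Since the strength of a collection of forms of varying (bi)degrees is the minimum over each (bi)degree class, we may assume from the outset that every $f_i$ is bihomogeneous of a common bidegree $(d,e)$. By the definition of strength of an algebraic map, there then exist scalars $c_{i,\alpha} \in K$, not all zero, such that
\[
F(v, w_1, \ldots, w_\ell) \;=\; \sum_{i=1}^{r}\;\sum_{|\alpha|=e} c_{i,\alpha}\,[x^\alpha]\, f_i\!\left(v,\; \sum_{j=1}^{\ell} x_j w_j\right)
\]
is a bihomogeneous form on $V \times W^\ell$ of bidegree $(d,e)$ with $\str(F) < s$.

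The key step is to pull $F$ back along the linear map $L_a \colon V \times W \to V \times W^\ell$ sending $(v,w) \mapsto (v, a_1 w, \ldots, a_\ell w)$, where $a = (a_1,\ldots,a_\ell) \in K^\ell$ is a parameter to be chosen. Because $f_i$ is bihomogeneous of bidegree $(d,e)$,
\[
f_i\!\left(v,\; \bigl(\textstyle\sum_j a_j x_j\bigr) w\right) \;=\; \bigl(\textstyle\sum_j a_j x_j\bigr)^{e}\, f_i(v, w),
\]
so expanding by the multinomial theorem yields
\[
F \circ L_a \;=\; \sum_{i=1}^{r} p_i(a)\, f_i, \qquad p_i(a) \;:=\; \sum_{|\alpha|=e} \binom{e}{\alpha}\, c_{i,\alpha}\, a^\alpha.
\]
Since the $c_{i,\alpha}$ are not all zero and the multinomial coefficients are non-zero in characteristic zero, at least one $p_i$ is a non-zero polynomial in $a$; using that $K$ is infinite we pick $a \in K^\ell$ so that $p_i(a) \ne 0$ for some $i$. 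Then $g := F \circ L_a = \sum_i p_i(a)\, f_i$ is a non-trivial $K$-linear combination of the $f_i$.

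Finally, pullback along a linear map never increases strength: any decomposition $F = \sum_{k=1}^{s-1} G_k H_k$ with $G_k, H_k$ bihomogeneous of strictly lower bidegree pulls back to $F \circ L_a = \sum_k (G_k \circ L_a)(H_k \circ L_a)$, each factor still of strictly lower bidegree. Hence $\str(g) \le \str(F) < s$, contradicting $\str(\ul{f}) \ge s$. The main subtlety to navigate is really just notational---keeping straight the various notions of strength at play (bihomogeneous forms, collections within a fixed bidegree, bigraded algebraic maps) and verifying that a low-strength linear combination of components of $\res_{\ul{f}}$ can be taken within a single bidegree class---after which the argument above is essentially a direct computation.
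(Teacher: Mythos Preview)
Your argument is essentially the paper's: both substitute $w_j \mapsto a_j w$ (the paper writes $\lambda_j$ for your $a_j$) to collapse a low-strength linear combination of the components $f_i^\alpha$ of $\res_{\ul f}$ into a linear combination of the $f_i$ themselves, and then use non-vanishing of the multinomial-weighted coefficient polynomials in characteristic~0. One small slip: the reduction in your first paragraph should be to a common \emph{total} degree, not a common bidegree. Strength in the paper is defined via total degree, so two $f_i$ with the same $d_i+e_i$ but different $(d_i,e_i)$ lie in the same degree class both for $\ul f$ and for $\res_{\ul f}$, and one cannot a priori separate them. This is harmless for your argument: the pullback computation goes through unchanged with $e$ replaced by $e_i$, giving $F\circ L_a=\sum_i p_i(a)\,f_i$ with $p_i(a)=\sum_{|\alpha|=e_i}\binom{e_i}{\alpha}c_{i,\alpha}a^{\alpha}$, and the remainder of the proof is identical. (Likewise, in the last paragraph the $G_k,H_k$ in a strength decomposition are merely homogeneous, not bihomogeneous; this does not affect the conclusion that pullback along the linear map $L_a$ cannot increase strength.)
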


\begin{proof}
This is a statement about each degree separately, so we may assume without loss of generality that the $f_i$ all have the same degree. Begin by expanding out each of the components 
\[
\res_{\ul{f}}(i)(v,w_1,\ldots,w_\ell) = \sum_{\ep\in(\Z^{\ge 0})^\ell,|\ep| = e_i} x^\ep f_i^\ep(v,w_1,\ldots,w_\ell), 
\]
where $f_i^\ep$ is multi-homogeneous of multi-degree $(d_i,\ep).$
Let $s = \str(\ul{f})$ and suppose there is a linear combination with
    \[
   \str\left( \sum_{i\in[r],|\ep| = e_i} c_{i,\ep} f_i^\ep(v,w_1,\ldots,w_\ell) \right) < s.
    \]
    Making the linear substitution $w_i = \lambda_i w$ for scalars $\lambda_1,\ldots,\lambda_\ell\in K$ leaves us with 
    \[
     \str\left( \sum_{i\in[r],|\ep| = e_i} c_{i,\ep} \binom{e_i}{\ep_1,\ldots,\ep_\ell}\lambda_1^{\ep_1}\ldots\lambda_\ell^{\ep_\ell} f_i(v,w) \right) < s.
    \]
    The definition of $s$ yields $\sum_{|\ep| = e_i} c_{i,\ep} \binom{e_i}{\ep_1,\ldots,\ep_\ell}\lambda_1^{\ep_1}\ldots\lambda_\ell^{\ep_\ell} = 0$ for all $i\in[r].$ Since this is true for any choice of  $\lambda_1,\ldots,\lambda_\ell\in K,$ we deduce $ c_{i,\ep} = 0 $ for all $i,\ep,$ which proves the claim. 
\end{proof}

\section{Multi-homogeneous forms} \label{s:multihom}

We now begin in earnest the proof of Theorem~\ref{main}. The main result of \S \ref{s:multihom} states that certain systems of multi-homogeneous equations admit a solution. This is the key tool that allows us to carry out Birch's ``all at once'' approach to finding orthogonal vectors in \S \ref{s:diag}, which we use to reduce the main theorem to the case of diagonal forms. Throughout, $K$ denotes a Birch field of characteristic~0 and $Z$ denotes the variety defined by the vanishing of the $f_i$'s.

\subsection{Set-up}

We say that a tuple $\ul{d}=(d_1, \ldots, d_r)$ of positive integers is \defn{odd} if all the $d_i$ are. For odd $\ul{d},$ consider the following statement:
\begin{description}[align=right,labelwidth=1.5cm,leftmargin=!]
\item[$\Sigma(\ul{d})$] There exists a quantity $C(\ul{d})$ such that if $\ul{f} = (f_1, \ldots, f_r)$ are forms of degrees $d_1, \ldots, d_r$ on a finite dimensional $K$-vector space $V$ and of strength at least $C(\ul{d})$ then $Z(K)$ is Zariski dense in $Z$.
\end{description}
To prove Theorem~\ref{main}, we must show that $\Sigma(\ul{d})$ holds for all odd $\ul{d}$. This will be achieved by induction with respect to the order in \S \ref{ss:reg}. Our inductive hypothesis is:
\begin{description}[align=right,labelwidth=1.5cm,leftmargin=!]
\item[$\Sigma^*(\ul{d})$] The statement $\Sigma(\ul{e})$ holds for all odd $\ul{e}<\ul{d}$.
\end{description}
To prove $\Sigma(\ul{d})$ holds for all $\ul{d}$, it suffices, by induction on $\ul{d}$, to prove that $\Sigma^*(\ul{d})$ implies $\Sigma(\ul{d})$. This is what we will eventually do. For notational convenience, we introduce one additional statement, depending on a single integer $d$:
\begin{description}[align=right,labelwidth=1.5cm,leftmargin=!]
\item[$\Sigma^{\dag}(d)$] The statement $\Sigma(\ul{e})$ holds for all odd $\ul{e}$ with $\max(\ul{e})<d$.
\end{description}
We will refer to the above three statements throughout \S \ref{s:multihom}--\S \ref{s:proof-end}.

\subsection{Main result}

We now turn towards out main result:

\begin{proposition} \label{prop:multi}
Suppose $\Sigma^{\dag}(d)$ holds. Let $\ul{f} = (f_{i,j})_{i\in [r],j\in [n_i]}$ be multi-homogeneous forms on $V_1 \times \cdots \times V_r$ such that $f_{i,j}$ has total degree $\le D$ and odd degree $<d$ on $V_i$. Then, assuming $\str(\ul{f}) \ge C_{\ref*{prop:multi}}(D,d,\sum_{i=1}^r n_i),$ the $K$-points of $Z$ are Zariski dense.
\end{proposition}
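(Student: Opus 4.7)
I argue by induction on $N := \sum_{i=1}^r n_i$, the total number of forms. When $N \le 1$, the system is either empty (so $Z$ is an affine space with dense $K$-points) or consists of a single form $f_{i_0,1}$ of odd degree $<d$ on $V_{i_0}$; in the latter case, Proposition~\ref{prop:res-strength} ensures that for a Zariski-dense set of $K$-rational values of the remaining variables the restriction to $V_{i_0}$ has arbitrarily high strength, and $\Sigma^{\dag}(d)$ then gives dense $K$-zeros fiberwise.

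For the inductive step, assume the result for total form count $N' < N$, and relabel so $n_r \ge 1$. Pick $\ell = \ell(D, d, N)$ large, introduce auxiliary vectors $w = (w_1, \ldots, w_\ell) \in V_r^\ell$ and scalars $x \in K^\ell$, and make the substitution $v_r = \sum_k x_k w_k$ throughout, obtaining forms
\[
F_{i, j}(v_1, \ldots, v_{r-1}, w, x) \; := \; f_{i, j}\!\left(v_1, \ldots, v_{r-1}, \textstyle\sum_k x_k w_k\right)
\]
on the enlarged space $V_1 \times \cdots \times V_{r-1} \times V_r^\ell \times K^\ell$. Each $F_{r, j}$ is now homogeneous of odd degree $d_{r,j,r}<d$ in $x$; each $F_{i, j}$ for $i<r$ retains odd degree $<d$ on $V_i$. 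Proposition~\ref{prop:res-strength} bounds the strength drop, so by taking $C_{\ref*{prop:multi}}$ large enough $\str(\ul{F})$ remains above all thresholds used below. The subsystem $\{F_{i, j}\}_{i<r}$ has only $N - n_r < N$ forms and fits the assigned-factor hypothesis on the enlarged space (treating $V_r^\ell$ and $K^\ell$ as auxiliary factors with no forms assigned); by the inductive hypothesis its zero locus $S_1$ has dense $K$-points. Proposition~\ref{prop:surjective} ensures $S_1$ is geometrically irreducible with generic fibers over the base $V_1 \times \cdots \times V_{r-1} \times V_r^\ell$ of the expected dimension, so density propagates fiberwise. On each such fiber in $K^\ell$, the residual constraints $\{F_{r, j}(v, w, \cdot)\}_j$ form a system of odd-degree-$<d$ forms whose collective strength on $K^\ell$ remains high for a Zariski-dense open locus of $(v, w)$ (again by Proposition~\ref{prop:res-strength}), and $\Sigma^{\dag}(d)$ delivers dense $K$-zeros within those fibers. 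This yields dense $K$-points on the full solution variety $S = \{\ul{F} = 0\}$, and the surjection $S \twoheadrightarrow Z$ given by $(v, w, x) \mapsto (v_1, \ldots, v_{r-1}, \sum_k x_k w_k)$ then transports density to $Z$.

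\textbf{Main obstacle.} The decisive step is the fiber combination: translating the inductive density on $S_1$ into density on the further-constrained variety $S$, via $\Sigma^{\dag}(d)$ applied to the residual forms $F_{r, j}(v, w, \cdot)$ on generic fibers of $S_1$. This requires, on one hand, a suitable fiberwise strengthening of the inductive output --- which one expects to follow from geometric irreducibility and expected-dimension control (Proposition~\ref{prop:surjective}) together with the parametric structure built by the substitution --- and, on the other hand, uniform control on the strength of the restricted systems on a dense open locus of base points, which is the content of Proposition~\ref{prop:res-strength}. Bookkeeping the strength thresholds so that all simultaneous applications of $\Sigma^{\dag}(d)$ and of the inductive hypothesis occur in the intended high-strength regime is the main technical burden.
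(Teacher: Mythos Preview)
Your approach has a genuine gap at the fiber-combination step. The fibers of the projection $S_1 \to V_1 \times \cdots \times V_{r-1} \times V_r^\ell$ are \emph{not} all of $K^\ell$: the fiber over $(v, \mathbf{w})$ is cut out by the forms $F_{i,j}(v, \mathbf{w}, \cdot)$ for $i < r$, and these have $x$-degree equal to the $V_r$-degree of $f_{i,j}$, which the hypotheses do not force to be odd. So the residual system you must solve in $x$ consists of the odd-degree forms $F_{r,j}(v,\mathbf{w},\cdot)$ \emph{together with} the possibly even-degree constraints defining the fiber, and $\Sigma^\dag(d)$ does not apply to such a mixed system. Density of $K$-points on $S_1$ does not rescue this either: it hands you $K$-points $(v,\mathbf{w},x)$ with the $i<r$ equations satisfied, but gives no mechanism to then adjust $x$ to also kill the $F_{r,j}$'s while remaining in that fiber. (Proposition~\ref{prop:surjective} says nothing about the fibers of this projection; it concerns fibers of the map $\ul{F}$ itself.)

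The paper avoids exactly this obstacle by replacing your $S_1$ with the stronger locus $V(\res_{\ul{f}}) \subset (V_1 \times \cdots \times V_{r-1}) \times V_r^\ell$ on which the forms $f_{i,j}$ for $i<r$ vanish \emph{identically} on the span of $w_1, \ldots, w_\ell$; equivalently, all coefficient functions $f_{i,j}^{\epsilon}$ vanish there. Over any such point the $x$-fiber is the full $K^\ell$, so the only $x$-equations left are the odd ones $F_{r,j}(v,\mathbf{w},\cdot)$, and $\Sigma^\dag(d)$ applies cleanly. The cost is that this locus is defined by many more equations, which is why the paper inducts on the number $r'$ of non-empty factors rather than on $N$; Proposition~\ref{prop:res-strength} is invoked precisely to control the strength of those coefficient functions (not of your bundled $F_{i,j}$'s, to which it does not literally apply). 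Two further lemmas (Lemmas~\ref{res-irr} and~\ref{good-res}) are then needed to show that the good locus---where the restricted forms $\res_{f_{r,j}}(v,\mathbf{w})$ have high strength and the test function $h$ survives on their zero set---is nonempty open in $V(\res_{\ul{f}})$, so that the inductively supplied dense $K$-points actually land there.
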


To prove this, we will need two geometric lemmas regarding collections of bihomogeneous forms. For a graded vector space $V=V_1 \times \cdots \times V_n$ and a multi-degree $d=(d_1, \ldots, d_n)$, we write $\cP_d(V)$ for the space of forms on $V$ that are multi-homogeneous of multi-degree $d$. Let $\ul{f} = (f_1,\ldots,f_r),\ \ul{g} = (g_1,\ldots,g_s) $ be bihomogeneous forms on $V\times W.$ Suppose that $\ul{f},\ul{g}$ have bidegrees $\ul{d} = ((d_i,d_{i}'))_{i\in[r]}$ and $\ul{e} = ((e_i,e_{i}'))_{i\in[s]},$ respectively.

\begin{lemma}\label{res-irr}
      Consider the following subvariety of $V\times W^\ell\times \A^\ell:$
     \[
    X = \{ (v,\mathbf{w},\mathbf{x}) : \res_{\ul{f}} (v,\mathbf{w}) \equiv 0,\ \ul{g}(v,\mathbf{x}\cdot \mathbf{w}) = 0 \}.
     \]
    If $\ell > s+3$ and $\str(\ul{f},\ul{g}) \ge C_{\ref*{res-irr}}(\ul{d},\ul{e},l)$ then $X$ is irreducible.
\end{lemma}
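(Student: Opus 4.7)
The plan is to realize $X$ as the preimage of an irreducible ``universal'' incidence variety under a morphism that is surjective with geometrically irreducible fibers, so that irreducibility passes to the preimage.

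First I would form the combined restriction map
\[
\Phi := (\res_{\ul{f}}, \res_{\ul{g}}) \colon V \times W^{\ell} \to P_f \times P_g,
\]
where $P_f = \prod_{i=1}^r \cP_{d_i'}(\A^\ell)$ and $P_g = \prod_{j=1}^s \cP_{e_j'}(\A^\ell)$. Applied to the concatenation $(\ul{f},\ul{g})$, Proposition~\ref{prop:res-strength} yields $\str(\Phi) \ge \str(\ul{f},\ul{g})$, so by making $C_{\ref*{res-irr}}$ sufficiently large the strength hypothesis of Proposition~\ref{prop:surjective} is met and $\Phi$ is surjective with geometrically irreducible fibers (of constant dimension, by miracle flatness applied to the smooth source and target). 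Introducing the tautological incidence variety
\[
T := \{(q,\mathbf{x}) \in P_g \times \A^\ell : q(\mathbf{x}) = 0\},
\]
one sees directly from the definition of $X$ that $X = (\Phi \times \mathrm{id}_{\A^\ell})^{-1}(\{0\} \times T)$. Since $\Phi \times \mathrm{id}_{\A^\ell}$ inherits surjectivity and geometric irreducibility of fibers, irreducibility of $X$ reduces to irreducibility of $T$.

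To show $T$ is irreducible I would project $T \to \A^\ell$. Over any $\mathbf{x} \neq 0$ the condition $q_j(\mathbf{x}) = 0$ is a nonzero linear condition on the coefficients of $q_j$, so the fiber is a linear subspace of $P_g$ of codimension exactly $s$, making $T|_{\A^\ell\setminus\{0\}}$ the total space of a vector bundle over an irreducible base and hence irreducible. The fiber $T_0 = P_g \times \{0\}$ over the origin is strictly larger than the generic fiber, but I claim every $(q_0, 0) \in T_0$ lies in the closure of $T|_{\A^\ell\setminus\{0\}}$: by Krull's height theorem the projective variety $V(q_0) \subset \bP^{\ell-1}$ is non-empty whenever $\ell - 1 \ge s$ (comfortably implied by $\ell > s+3$), so one may choose $\mathbf{x}_0 \neq 0$ with $q_0(\mathbf{x}_0) = 0$ and take the curve $t \mapsto (q_0, t\mathbf{x}_0)$, which lies entirely in $T$ and leaves $T_0$ for $t \neq 0$.

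The main technical obstacle is calibrating the constant $C_{\ref*{res-irr}}(\ul{d},\ul{e},\ell)$ correctly: the components of $\Phi$ are multi-homogeneous polynomials in $v, w_1, \ldots, w_\ell$ of many different multi-degrees (indexed by the exponent tuples $\epsilon$ appearing in the proof of Proposition~\ref{prop:res-strength}), so the threshold of Proposition~\ref{prop:surjective} must be met separately in each such degree, and both the number of degrees and the thresholds themselves grow with $\ell$. The apparent slack between $\ell > s+3$ and the bound $\ell > s$ demanded by my argument for $T$ presumably leaves room for a codimension-of-singular-locus estimate needed downstream when this lemma is applied in \S\ref{s:diag}.
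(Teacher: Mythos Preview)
Your approach is correct and takes a genuinely different route from the paper's. One small slip: miracle flatness runs the other way --- it takes equidimensional fibers as a \emph{hypothesis}, not a conclusion. What you need is that every fiber of $\Phi$ has the expected codimension; this follows from Proposition~\ref{prop:rk-sing}, since once the singular locus of $\Phi$ has codimension exceeding the number $N$ of components, every fiber $\Phi^{-1}(c)$ (cut out by $N$ equations in affine space) is forced to have codimension exactly $N$, and then miracle flatness applies. With that fix, your pullback-of-incidence-variety argument goes through.

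The paper instead writes $X=\Psi^{-1}(0)$ for the single map $\Psi(v,\mathbf{w},\mathbf{x})=(\res_{\ul f}(v,\mathbf{w}),\,\ul g(v,\mathbf{x}\cdot\mathbf{w}))$ into $\prod_i \cP_{d_i'}(K^\ell)\times\A^s$, and bounds the codimension in $X$ of the locus where $d\Psi$ is not surjective, splitting into the cases $\mathbf{x}=0$ (where the estimate $\ell-s>3$ is used) and $\mathbf{x}\neq 0$ (where Proposition~\ref{prop:res-strength} controls the strength of the specialized map $\Psi_{\mathbf{x}}$). The SGA~2 criterion then yields that $X$ is an irreducible complete intersection. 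So the $+3$ in $\ell>s+3$ is an artifact of going through the SGA criterion rather than something your argument needs downstream; your route only requires $\ell>s$. On the other hand, the paper's route gives the complete-intersection statement for free and avoids the delicate step of transporting irreducibility along a flat map with irreducible fibers.
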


\begin{proof}
    $X$ is the zero locus of a map $\Phi: V\times W^\ell\times \A^\ell \to \prod_{i=1}^r \cP_{d_{i}'}(K^\ell)\times \A^s. $ Let $Y\subset X$ be the locus of points in $X$ where the differential of $\Phi$ is not surjective. By \cite[Corollaire XI 3.14]{SGA}, it's enough to show that $\codim_X Y > 3$ to deduce that $X = (\Phi = 0)$ is an irreducible complete intersection. Let $\pi: X \to \A^\ell$ be the projection and let  $X' = ((v,\mathbf{w},\mathbf{x}): \res_{\ul{f}} (v,\mathbf{w}) \equiv 0)$. Note that $\codim_{X'} \pi^{-1}(0) = l, $ so we have  $\codim_X \pi^{-1}(0)\cap Y \ge l-s > 3.$ For any fixed $\mathbf{x} \neq 0,$ we obtain a map $\Phi_{\mathbf{x}}(v,\mathbf{w}) := \Phi (v,\mathbf{w},\mathbf{x}).$ Then we have
    \[
    Y\cap \pi^{-1}(\mathbf{x}) \subset ((v,\mathbf{w},\mathbf{x})\in X: \textnormal{Differential of }\Phi_{\mathbf{x}}(v,\mathbf{w}) \textnormal{ is not surjective}).
    \]
    The components of $\Phi_{\mathbf{x}}$ are a linearly independent subset of the components of $\res_{\ul{f},\ul{g}},$ so by Proposition \ref{prop:res-strength} we have $\str(\Phi_{\mathbf{x}}) \ge C_{\ref*{res-irr}}.$ Applying Proposition \ref{prop:rk-sing} yields \linebreak $\codim_X Y\cap \pi^{-1}(\mathbf{x}) > l+3.$ Taking the union over all $\mathbf{x} \in A^\ell,$ we conclude that $\codim_X Y > 3,$ as desired.
\end{proof}

\begin{lemma}\label{good-res}
Let $\ul{f},\ul{g}$ be as above and assume that $e_{i}'>0$ for all $i\in[s].$ Suppose that $\ell\ge \ell_0(\ul{e},t)$ and $\str(\ul{f},\ul{g}) \ge C_{\ref*{good-res}}(\ul{d},\ul{e},\ell,t)$ and let $h\not\in I(Z)$. Then for generic $(v,\mathbf{w})\in V(\res_{\ul{f}})$ the following conditions hold:
\begin{enumerate}
\item $\str \big( \res_{\ul{g}}(v,\mathbf{w}) \big) \ge t$ and
\item $\res_{h}(v,\mathbf{w}) $ does not vanish identically on $V(\res_{\ul{g}}(v,\mathbf{w})) \subset \A^\ell$.
\end{enumerate}
\end{lemma}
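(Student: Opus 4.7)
The plan is to treat (1) and (2) separately, relying on the surjectivity/irreducibility machinery of \S\ref{s:bg} together with the irreducibility of the incidence variety furnished by Lemma~\ref{res-irr}. Throughout, I would take $C_{\ref{good-res}}$ large enough to invoke Propositions~\ref{prop:res-strength}, \ref{prop:surjective}, \ref{prop:str-gen}, and Lemma~\ref{res-irr}, and require $\ell_0(\ul{e}, t) \ge \max(\len(\ul{e}) + 4,\, C_{\ref{prop:str-gen}}(\ul{e}, t))$ so that both Lemma~\ref{res-irr} and the non-density statement coming from Proposition~\ref{prop:str-gen} are in force.

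For (1), I would apply Proposition~\ref{prop:res-strength} to the joint restriction
\begin{displaymath}
\Psi = (\res_{\ul{f}}, \res_{\ul{g}}) \colon V \times W^{\ell} \longrightarrow \prod_{i=1}^r \cP_{d_i'}(K^{\ell}) \times \prod_{j=1}^s \cP_{e_j'}(K^{\ell})
\end{displaymath}
to get $\str(\Psi) \ge \str(\ul{f},\ul{g})$, then invoke Proposition~\ref{prop:surjective} to conclude that $\Psi$ is surjective with geometrically irreducible fibers. Taking the fiber over $0$ in the first factor shows $V(\res_{\ul{f}})$ is irreducible, and the surjectivity of $\Psi$ forces $\res_{\ul{g}} \colon V(\res_{\ul{f}}) \to \prod_j \cP_{e_j'}(K^{\ell})$ to be surjective. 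Proposition~\ref{prop:str-gen} then guarantees that the strength $<t$ locus $S$ in the target is constructible and not dense. If $\res_{\ul{g}}^{-1}(S)$ contained a dense open of $V(\res_{\ul{f}})$, applying $\res_{\ul{g}}$ would produce a dense subset of the target contained in $S$, contradicting non-density of $S$. Since a constructible set in an irreducible variety which does not contain a dense open must sit inside a proper subvariety, this yields (1).

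For (2), I would apply Lemma~\ref{res-irr} to see that
\begin{displaymath}
X = \{(v, \mathbf{w}, \mathbf{x}) : \res_{\ul{f}}(v, \mathbf{w}) \equiv 0,\ \ul{g}(v, \mathbf{x} \cdot \mathbf{w}) = 0\}
\end{displaymath}
is irreducible (the condition $\ell > s+3$ is absorbed into $\ell_0$). The projection $\pi \colon X \to V(\res_{\ul{f}})$ is surjective, because $\mathbf{x} = 0$ always lies in the fiber (using $e_j' > 0$). Setting $H(v, \mathbf{w}, \mathbf{x}) = h(v, \mathbf{x} \cdot \mathbf{w})$, the failure set of (2) is $B = \{(v,\mathbf{w}) : \pi^{-1}(v,\mathbf{w}) \subset V(H)\}$, whose complement $\pi(X \setminus V(H))$ is constructible by Chevalley. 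To exhibit a point of $X$ where $H \ne 0$, I would use $h \notin I(Z)$ to pick $(v_0, w_0) \in Z$ with $h(v_0, w_0) \ne 0$ and take $\mathbf{w} = (w_0, 0, \ldots, 0)$, $\mathbf{x} = (1, 0, \ldots, 0)$; then $\res_{\ul{f}}(v_0, \mathbf{w})_i = x_1^{d_i'} f_i(v_0, w_0) = 0$ identically, $\ul{g}(v_0, \mathbf{x} \cdot \mathbf{w}) = \ul{g}(v_0, w_0) = 0$, and $H(v_0, \mathbf{w}, \mathbf{x}) = h(v_0, w_0) \ne 0$. Hence $Y := V(H) \cap X$ is a proper closed subvariety of the irreducible $X$. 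If $B$ contained a non-empty open $U \subset V(\res_{\ul{f}})$, then $\pi^{-1}(U)$ would be a non-empty---hence dense---open of $X$ contained in the closed set $Y$, forcing $Y = X$, a contradiction. So $B$ lies in a proper subvariety, proving (2).

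The main obstacle is the argument for (2), where the qualitative hypothesis $h \notin I(Z)$ has to be converted into the concrete point $(v_0, \mathbf{w}, \mathbf{x}) \in X$ witnessing $H \ne 0$; once this point is in hand, Lemma~\ref{res-irr} together with the ``constructible and not containing a dense open implies contained in a proper subvariety'' dichotomy finishes the job. Part (1) is structurally parallel but technically easier, requiring only the joint surjectivity of $\Psi$ and the non-density of the low-strength locus from Proposition~\ref{prop:str-gen}.
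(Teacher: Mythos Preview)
Your proof is correct and follows essentially the same route as the paper's: irreducibility of $V(\res_{\ul{f}})$, surjectivity of $\res_{\ul{g}}$ restricted to it combined with Proposition~\ref{prop:str-gen} for (1), and irreducibility of $X$ from Lemma~\ref{res-irr} plus surjectivity of $\pi$ for (2). The only notable difference is that you supply the explicit witness $(v_0,(w_0,0,\dots,0),(1,0,\dots,0))$ showing $H\notin I(X)$, which the paper simply asserts; this is a welcome addition, and your phrasing ``fiber over $0$ in the first factor'' should be read as applying Proposition~\ref{prop:surjective} to $\res_{\ul{f}}$ itself (the paper instead invokes Lemma~\ref{res-irr} with no $g_i$'s for the same conclusion).
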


\begin{proof}
    First note that $V(\res_{\ul{f}})$ is irreducible by Lemma~\ref{res-irr} (applied with no $g_i$'s), so it is enough to prove that each of these conditions separately holds on a nonempty open subset of $V(\res_{\ul{f}}).$ By Proposition~\ref{prop:str-gen}, a generic choice of $\ul{h} = (h_1,\ldots,h_s) \in \prod_{i=1}^s \cP_{e_{i}'}(K^\ell)$ has $\str(\ul{h}) \ge t$. Combining Propositions~\ref{prop:surjective} and~\ref{prop:res-strength}, we see that the map $\res_{(\ul{f},\ul{g})}$ is surjective and so $\res_{\ul{g}} \restriction_{V(\res_{\ul{f}})}$ is also surjective. This implies that the first condition holds on a nonempty open subset of $V(\res_{\ul{f}}),$ as desired. Now we turn to the second condition. Let $X$ be the variety from the previous lemma, $H(v,\mathbf{w},\mathbf{x}) = \res_h(v,\mathbf{w})(\mathbf{x})$ and $\pi:X\to V(\res_{\ul{f}})$ the projection $\pi(v,\mathbf{w},\mathbf{a})=  (v,\mathbf{w}).$
    Note that: 
    \begin{enumerate}
        \item $X$ is irreducible by Lemma~\ref{res-irr},
        \item $H\not\in I(X)$ and
        \item $\pi$ is surjective.
    \end{enumerate}
    These three facts imply that for a generic choice 
    of $(v,\mathbf{w})\in V(\res_{\ul{f}})$ we have
    \[
    H(v,\mathbf{w},\cdot) = \res_h(v,\mathbf{w})\not\in I(\pi^{-1}(v,\mathbf{w})) = I(V(\res_{\ul{g}}(v,\mathbf{w}))). \qedhere
    \]
\end{proof}

Now we are ready to prove the proposition.

\begin{proof}[Proof of Proposition \ref{prop:multi}]
    Given $h\not\in I(Z),$ we want a $K$-point $x\in Z$ with $h(x)\neq 0.$ The proof is by induction on $r'(\ul{f}) = |\{i\in[r]:n_i > 0\}|,$ the base case $r' = 0$ being trivial. For the inductive step, assume without loss of generality that $n_r>0.$ Let $C = C(d,n_r)$ be the constant from hypothesis $\Sigma^{\dag}(d).$ Apply the previous lemma with
    \begin{enumerate}
        \item $V=V_1\times\ldots\times V_{r-1},\ W = V_r,$
        \item $f_i$ given by $(f_{i,j})_{i\in [r-1],j\in [n_i]},$  
        \item and $g_j$ given by $f_{r,j}.$ 
    \end{enumerate}
      Assuming $l$ is sufficiently large, for generic $(v,\mathbf{w})\in V(\res_{\ul{f'}})$ we have $\str(\res_{f_{r,j}}((v,\mathbf{w})) \ge C$ and $\res_h(v,\mathbf{w}) \not\in I(V(\res_{f_{r,j}}(v,\mathbf{w}))).$  By the inductive hypothesis for $r'-1$, we can find a $K$-point $(v,\mathbf{w})\in V(\res_{\ul{f'}})$ satisfying these conditions. Hypothesis $\Sigma^{\dag}(d)$ then yields a $K$-point $\mathbf{x} \in V(\res_{f_{r,j}}(v,\mathbf{w}))$ with $\res_{h}(v,\mathbf{w})(\mathbf{x}) \neq 0.$ The $K$-point $(v,\mathbf{x}\cdot \mathbf{w})$ is therefore contained in $Z \cap (h\neq 0),$ as desired.
\end{proof}

\section{Diagonalization} \label{s:diag}

In \S \ref{s:diag}, we carry out Birch's ``all at once'' approach to finding orthogonal vectors, which allows us to reduce Theorem~\ref{main} to the case of diagonal forms.

\subsection{Orthogonal decompositions}

We recall the following important idea from \cite{BDS}:

\begin{definition}
Let $f \in \cP_d(V)$. Subspaces $V_1, \ldots, V_r$ of $V$ are \emph{$f$-orthogonal} if
\begin{displaymath}
f(v_1+\cdots+v_r) = f(v_1)+\cdots+f(v_r)
\end{displaymath}
for all $v_i \in V_i$. Vectors $v_1,\ldots,v_r\in V$ are $f$-\emph{orthogonal} if they span $f$-orthogonal lines.
\end{definition}

Concretely, if $x_{i,j}$, for $1 \le j \le n_i$, are coordinates on $V_i$, then the $V_i$'s are $f$-orthogonal if and only if the restriction of $f$ to $V_1 \oplus \cdots \oplus V_r$ has no mixed terms, i.e., each monomial that appears is a product of variables $x_{i,j}$ with $i$ fixed and $j$ varying.

We now use the $\Sigma^{\dag}(d)$ hypothesis to construct orthogonal spaces with some additional properties. This is the main argument where we apply Birch's ``all at once'' approach, and is our key technical improvement over \cite{BDS}.

\begin{proposition} \label{prop:ortho}
Suppose $\Sigma^{\dag}(d)$ holds. Let $f_1, \ldots, f_r$ be forms of odd degrees $\le d$ on a vector space $V$, let $g$ be a polynomial that does not vanish identically on $Z$, and let $s, n \in \N$ be given. Assume that the strength of $f_1, \ldots, f_r$ is at least $C_{\ref*{prop:ortho}}(r,d,n,s)$. Then we can find linearly independent subspaces $V_1, \ldots, V_{n+1}$ of $V$ such that:
\begin{enumerate}
\item The spaces $V_1, \ldots, V_{n+1}$ are $f_i$-orthogonal for all $i$.
\item The restriction of $f_1, \ldots, f_r$ to $V_i$ has strength $\ge s$ for each $1 \le i \le n$.
\item The polynomial $g$ does not vanish identically on $Z \cap V_{n+1}$.
\end{enumerate}
\end{proposition}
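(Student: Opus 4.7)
Following Birch's ``all at once'' idea, I parameterize each $V_i = \sp(v_{i,1}, \ldots, v_{i,n_i})$ for integers $n_i$ to be chosen large. Setting $\tilde V = V^{n_1} \times \cdots \times V^{n_{n+1}}$, I look for the whole tuple $(v_{i,j}) \in \tilde V$ simultaneously as a $K$-point of the variety $W \subset \tilde V$ cut out by the $f_k$-orthogonality equations. The crucial observation is that these equations form a multi-homogeneous system to which Proposition~\ref{prop:multi} applies, producing Zariski density of $W(K)$ in $W$; conditions (a)--(c) then correspond to an open subset of $W$ that is automatically hit by $K$-points once it is shown to be non-empty over $\bar K$.

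\textbf{Translating orthogonality into the multi-homogeneous framework.} Decomposing $f_k(\sum_{i} y_i) = \sum_\alpha f_k^{(\alpha)}(y_1, \ldots, y_{n+1})$ into multi-homogeneous pieces indexed by $\alpha = (\alpha_1, \ldots, \alpha_{n+1})$ with $|\alpha| = d_k$, the $V_i$'s are $f_k$-orthogonal if and only if each \emph{mixed} component (two or more positive $\alpha_i$) vanishes. Substituting $y_i = \sum_j x_{i,j} v_{i,j}$ and reading off the coefficients in the $x_{i,j}$'s yields polynomial equations in the $(v_{i,j})$ that are multi-homogeneous of multi-degree $\alpha$ in the partition of $\tilde V$. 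Since $d_k$ is odd and $\alpha$ is mixed, some $\alpha_{i^*}$ is odd and lies in $(0, d_k) \subset (0, d)$; assigning each equation to the factor $V^{n_{i^*}}$ realizes the system in exactly the form required by Proposition~\ref{prop:multi}. Moreover, the orthogonality equations form a subcollection of the components of the restriction map $\tilde V \to \prod_k \cP_{d_k}(K^N)$ (with $N = \sum n_i$), so by Proposition~\ref{prop:res-strength} their collective strength is at least $\str(\ul{f})$. Hence for $\str(\ul{f})$ large enough, Proposition~\ref{prop:multi} yields Zariski density of $W(K)$ in $W$.

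\textbf{Open conditions.} Let $U \subset W$ be the locus where (i) the $V_i$'s are linearly independent, (ii) the restrictions of $f_1, \ldots, f_r$ to each $V_i$ for $1 \le i \le n$ have strength $\ge s$, and (iii) $g$ does not vanish on $Z \cap V_{n+1}$. All three are Zariski open, so by Zariski density of $W(K)$ in $W$ it suffices to show $U \neq \emptyset$ over $\bar K$. The strength conditions (ii) become generic via Propositions~\ref{prop:res-strength}, \ref{prop:surjective}, and~\ref{prop:str-gen}, provided the $n_i$'s are sufficiently large relative to $s$. For (iii), I would argue in the spirit of Lemma~\ref{good-res}: on a suitable irreducible component of the subvariety of $\tilde V$ obtained by dropping the orthogonality equations intrinsic to $V^{n_{n+1}}$, the restriction of $g$ to $V_{n+1}$ does not lie in the ideal generated by the restrictions of the $f_k$'s, because $g \not\in I(Z)$ and the relevant restriction map is surjective.

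\textbf{Main obstacle.} The principal technical challenge is establishing non-emptiness of $U$ over $\bar K$, especially the compatibility of condition (iii) with the orthogonality constraints; this requires an irreducibility-plus-surjectivity argument analogous to Lemmas~\ref{res-irr} and~\ref{good-res}, applied to the variety $W$. Once in hand, Zariski density produces the desired $(v_{i,j})$, and the spans $V_i$ satisfy (a)--(c).
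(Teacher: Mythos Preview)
Your plan is essentially the paper's proof: parameterize all $n+1$ subspaces at once by $\cV=V^{\ell(n+1)}$, observe that the orthogonality equations are the mixed components of $\res_{\ul f}$ (hence of strength $\ge\str(\ul f)$ by Proposition~\ref{prop:res-strength}), apply Proposition~\ref{prop:multi} to get density of $K$-points on the orthogonality variety $X$, and then intersect three open sets. You have correctly located the only real difficulty, namely condition~(c).

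The paper resolves (c) more simply than the route you sketch. The key observation you are missing is that the projection $\pi\colon X\to V^{\ell}$ onto the last block $v_{n+1,\bullet}$ is \emph{trivially} surjective: for any $v_{n+1,\bullet}\in V^{\ell}$, setting $v_{i,\bullet}=0$ for $i\le n$ gives a point of $X$, because the zero subspace is $f_k$-orthogonal to anything. Thus (c) decouples completely from the orthogonality constraints, and one only needs a separate lemma (proved right after Proposition~\ref{prop:ortho}) that for generic $v_\bullet\in V^{\ell}$ the polynomial $g$ does not vanish on $Z\cap\sp(v_\bullet)$. Your proposed detour through an analogue of Lemmas~\ref{res-irr}--\ref{good-res} is unnecessary. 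Two further points you glossed over: the paper uses geometric irreducibility of $X$ (immediate from high strength of the mixed components, via Proposition~\ref{prop:surjective}) to guarantee that the three non-empty open sets actually intersect; and non-emptiness of the linear-independence locus is handled by exhibiting an explicit $\ol K$-point, namely a basis of any $(n+1)\ell$-dimensional subspace of $\ol K\otimes V$ on which all $f_i$ vanish (such a subspace exists by Brauer's theorem over $\ol K$).
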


\begin{proof}
Let $\ell$ be a large integer and let $\cV=V^{\ell (n+1)}$; we denote points of $\cV$ by $(v_{1,\bullet}, \ldots, v_{n+1,\bullet})$, where each $v_{i,\bullet}$ is an $\ell$-tuple of vectors. We regard $\cV$ as the parameter space for the object we seek: given a point in $\cV$, we obtain spaces $V_1, \ldots, V_{n+1}$ by taking $V_i$ to be the span of the $\ell$-tuple $v_{i,\bullet}$. We let $X$ be the closed subvariety of $\cV$ where (a) holds. We show that $X$ is irreducible and that $X(K)$ is Zariski dense in $X$. We also show that there are non-empty open sets $U_1$, $U_2$, and $U_3$ of $X$ such that (a) holds on $U_1$, (b) holds on $U_2$, and linear independence holds on $U_3$. To complete the proof, we simply take a $K$-point of $U_1 \cap U_2 \cap U_3$.

\textit{The space $X$.} Let $W_i=K^{\ell}$ and $W=W_1 \times \cdots \times W_{n+1}$. Let $d_i$ be the degree of $f_i$. Let
\begin{displaymath}
\theta \colon \cV \to \prod_{i=1}^r \cP_{d_i}(W)
\end{displaymath}
be the restriction map $\res_{\ul{f}}$. This map has high strength by Proposition~\ref{prop:res-strength0}. Now, we have a decomposition
\begin{displaymath}
\cP_d(W) = \prod \cP_e(W),
\end{displaymath}
where the product is over multi-degrees $e=(e_1, \ldots, e_{n+1})$ of total degree $d$, and $\cP_e(W)$ is the space of multi-homogeneous functions on $W$ of multi-degree $e$. Let $\cP^1_d(W)$ be the product of those $\cP_e(W)$ where $e$ has one entry equal to $d$ and other entries equal to~0, and let $\cP^2_d(W)$ be the product of the remaining factors. Note that $\cP^1_d(W)$ consists of forms $h$ for which $W_1, \ldots, W_{n+1}$ are $h$-orthogonal, and we have an isomorphism $\cP^1_d(W) \cong \prod_{i=1}^{n+1} \cP_d(W_i)$.

Let $\theta^1$ and $\theta^2$ be the components of $\theta$ corresponding to $\cP^1$ and $\cP^2$. Thus $X=\theta_2^{-1}(0)$. Since $\theta_2$ is high strength, it follows that $X$ is geometrically irreducible. Moreover, Proposition~\ref{prop:multi} shows that $X(K)$ is dense in $X$; note that the equations defining $X$ are multi-homogeneous and each have odd degree $<d$ in one of the $\ell (n+1)$ components. Since $\theta$ is surjective (as a map of varieties), it follows that
\begin{displaymath}
\theta_1 \colon X \to \prod_{i=1}^r \cP^1_{d_i}(W)
\end{displaymath}
is surjective (as a map of varieties).

\textit{The first open set.} For each $1 \le i \le n$, let $U'_{1,i}$ be a non-empty open subset of $\prod_{j=1}^r \cP_{d_j}(W_i)$ consisting of tuples of strength at least $s$; such a set exists by Proposition~\ref{prop:str-gen}, provided $\ell$ is sufficiently large. Put $U'_{1,n+1}=\prod_{j=1}^r \cP_{d_j}(W_{n+1})$, and let $U'_1=\prod_{i=1}^{n+1} U'_{1,i}$, which is an open subset of $\prod_{i=1}^r \cP^1_{d_i}(W)$. Finally, we take $U_1=\theta_1^{-1}(U'_1)$. This is a non-empty subset of $X$ since $\theta_1$ is surjective on $X$.

\textit{The second open set.} Consider the map
\begin{displaymath}
\pi \colon \cV \to V^{\ell}, \qquad (v_{1,\bullet}, \ldots, v_{n+1,\bullet}) \mapsto v_{n+1,\bullet}.
\end{displaymath}
The map $\pi \vert_X$ is surjective: indeed, given any $v_{n+1,\bullet} \in V^{\ell}$, we obtain a point $(v_{1,\bullet}, \ldots, v_{n+1,\bullet})$ in $X$ by putting $v_{i,\bullet}=0$ for $1 \le i \le n$. Let $U_2'$ be a non-empty open subset of $V^{\ell}$ such that $g$ does not vanish identically on $Z \cap \operatorname{span}(v_{\bullet})$ for $v_{\bullet} \in V^{\ell}$; this exists by the following lemma. Finally, take $U_2=\pi^{-1}(U_2') \cap X$. This is non-empty since $\pi \vert_X$ is surjective.

\textit{The third open set.} Let $\tilde{U}_3$ be the open subset of $\cV$ consisting of points $(v_{i,j})$ such that the $(n+1)\ell$ vectors $v_{i,j}$ are linearly independent. Put $U_3=\tilde{U}_3 \cap X$. To show that $U_3$ is non-empty, we construct a $\ol{K}$-point of $U_3$. To this end, choose a linear subspace $W$ of $\ol{K} \otimes V$ of dimension $(n+1)\ell$ such that each $f_i$ restricts to~0 on $W$; the existence of such a subspace follows from Brauer's theorem, for instance. We take the $v_{i,j}$ to be any basis of $W$, and this will be a $\ol{K}$-point of $U_3$.
\end{proof}

\begin{lemma}
Let $Z$ be a closed subvariety of $V$ of codimension $\le r$ defined by homogeneous equations, let $g \in \cP(V)$ be a polynomial that does not vanish identically on $Z$, and let $\ell>r$. Then there exists a non-empty Zariski open subset $U$ of $V^{\ell}$ such that $g$ does not vanish identically on $Z \cap \operatorname{span}(v_{\bullet})$ for $v_{\bullet} \in U$.
\end{lemma}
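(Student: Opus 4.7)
Plan. The idea is to show that a generic $\ell$-plane $L \subset V$ meets $Z \setminus V(g)$, and then to pull this property back to $V^\ell$ via the span map. Replace $Z$ by an irreducible component $Z_0$ on which $g$ does not vanish identically; this component is a cone (since $Z$ is defined by homogeneous equations), has $\codim(Z_0) \le r$, and, assuming $\ell \le \dim V$ (otherwise any $v_\bullet$ spanning $V$ works trivially), satisfies $\dim Z_0 = \dim V - \codim(Z_0) > 0$. Note that $Z_0 \cap V(g)$ is a proper closed subset of the irreducible $Z_0$.

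Consider the incidence variety
\begin{displaymath}
\mathcal{I} = \{(p, L) \in Z_0 \times \Gr(\ell, V) : p \in L\},
\end{displaymath}
and its open subset $\mathcal{I}^{\circ} = \{(p, L) \in \mathcal{I} : g(p) \ne 0\}$. The open subset $\mathcal{I} \setminus (\{0\} \times \Gr(\ell, V))$ fibers as a Schubert-type bundle over the irreducible base $Z_0 \setminus \{0\}$, with each fiber $\{L : p \in L\}$ isomorphic to $\Gr(\ell-1, V/\langle p \rangle)$; it is therefore irreducible. Each $(0, L)$ lies in the closure of this open subset, because the general intersection bound $\dim(Z_0 \cap L) \ge \dim Z_0 + \ell - \dim V = \ell - \codim(Z_0) > 0$ (applied to the cone $Z_0$ and the linear subspace $L$) yields a nonzero $p \in Z_0 \cap L$, whence $(tp, L) \to (0, L)$ as $t \to 0$. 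Thus $\mathcal{I}$ is irreducible, and the second projection $\pi \colon \mathcal{I} \to \Gr(\ell, V)$ is surjective, since $(0, L) \in \mathcal{I}$ for every $L$. Moreover, $\mathcal{I}^{\circ}$ is nonempty: choose a nonzero $p \in Z_0 \setminus V(g)$, which exists because $Z_0 \setminus V(g)$ is open dense in the positive-dimensional $Z_0$, and take any $L$ containing $p$. Hence $\mathcal{I}^{\circ}$ is dense open in the irreducible $\mathcal{I}$.

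Density of $\pi(\mathcal{I}^{\circ})$ in $\Gr(\ell, V)$ then follows formally from $\overline{\pi(\mathcal{I}^{\circ})} \supseteq \pi(\overline{\mathcal{I}^{\circ}}) = \pi(\mathcal{I}) = \Gr(\ell, V)$, and since $\pi(\mathcal{I}^{\circ})$ is constructible, it contains a nonempty Zariski open $U' \subset \Gr(\ell, V)$. Taking $U = \psi^{-1}(U')$, where $\psi \colon v_\bullet \mapsto \sp(v_\bullet)$ is the surjective morphism from the open locus of linearly independent $\ell$-tuples in $V^\ell$ to $\Gr(\ell, V)$, yields a nonempty Zariski open subset of $V^\ell$: for $v_\bullet \in U$, the span $\sp(v_\bullet) \in U'$ meets $Z_0 \setminus V(g)$, and so $g$ does not vanish identically on $Z \cap \sp(v_\bullet)$. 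The only step requiring genuine care is the irreducibility of $\mathcal{I}$, which rests on the cone-intersection lower bound; everything else is a standard manipulation of constructible images.
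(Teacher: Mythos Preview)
Your argument is correct and follows essentially the same route as the paper: reduce to an irreducible component, set up the incidence variety of pairs $(\text{point of }Z_0,\ \ell\text{-plane through it})$, show this is irreducible and surjects onto $\Gr(\ell,V)$, and conclude that the locus of $\ell$-planes meeting $Z_0\setminus V(g)$ is a dense constructible set. The only notable difference is packaging: the paper passes to $\bP(V)$ and to a homogeneous component $g'$ of $g$, so that the incidence variety is $p^{-1}(\ol{Z})\subset\bP(\cE)$ and its irreducibility is immediate from the smoothness of $p\colon\bP(\cE)\to\bP(V)$ with connected fibers; you stay in $V$, which forces you to treat the apex $0$ separately via the cone-intersection bound $\dim(Z_0\cap L)\ge \ell-\codim(Z_0)>0$ to show $(0,L)$ lies in the closure of the nonzero part of $\mathcal{I}$. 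One small remark: both your reduction and the paper's tacitly assume that the chosen irreducible component $Z_0$ still has codimension $\le r$; this is harmless in the intended application (where $Z$ is cut out by $r$ equations, so every component has codimension $\le r$ by Krull), but is worth noting if one reads the lemma in isolation.
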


\begin{proof}
Replacing $Z$ with one of its irreducible components if necessary, we may assume that $Z$ is irreducible. Since $g$ does not vanish on $Z$, it follows that some homogeneous component $g'$ of $g$ does not vanish on $Z$. We let $\ol{Z} \subset \bP(V)$ be the projective variety associated to $Z$, and let $A \subset \ol{Z}$ be the open set defined by $g' \ne 0$.

Let $\Gr_{\ell}(V)$ be the Grassmannian of $\ell$-dimensional subspaces of $V$ and let $\cE \subset \Gr_{\ell}(V) \times V$ be the tautological bundle. Let $p \colon \bP(\cE) \to \bP(V)$ and $q \colon \bP(\cE) \to \Gr_{\ell}(V)$ be the projection maps. Let $\tilde{Z}=p^{-1}(\ol{Z})$, which is irreducible since $\ol{Z}$ is irreducible and $p$ is smooth with connected fibers. If $E$ is any $\ell$-plane in $V$ then $Z \cap E$ contains a non-zero point, since $\ell>r$, and so $q(\tilde{Z})=\Gr_{\ell}(V)$.

Let $\tilde{A}=p^{-1}(A)$, and let $B=q(\tilde{A})$. Since $\tilde{Z}$ is irreducible, it follows that $\tilde{A}$ is a dense open set in $\tilde{Z}$. Since $q(\tilde{Z})=\Gr_{\ell}(V)$, it follows that $B$ is a dense constructible subset of $\Gr_{\ell}(V)$. Note that $B$ consists of those $\ell$-planes $E$ such that $g'$ is not identically zero on $E \cap Z$; for such $E$, it follows that $g$ is also not identically zero on $E \cap Z$.

Since $B$ is a dense constructible set, it contains a non-empty open subset $B_0$ of $\Gr_{\ell}(V)$. We take $U$ to be the set of all linear independent tuples $v_{\bullet} \in V^{\ell}$ such that $\operatorname{span}(v_{\bullet}) \in B_0$.
\end{proof}

\subsection{Reduction to diagonal forms}

We now improve upon the orthogonal decomposition found above by leveraging the stronger hypothesis $\Sigma^*(\ul{d})$. We begin with a lemma. In what follows, $\ul{d}=(d_1, \ldots, d_r)$ is a tuple of odd integers.

\begin{lemma} \label{lem:diag}
Suppose $\Sigma^*(\ul{d})$ holds. Let $f_1, \ldots, f_r$ be forms of degrees $d_1, \ldots, d_r$ on $V$. Assume that the strength of $f_1, \ldots, f_r$ is at least $C_{\ref*{lem:diag}}(\ul{d})$. Then there exists a vector $v \in V$ such that $f_1(v) \ne 0$ and $f_i(v)=0$ for $2 \le i \le r$.
\end{lemma}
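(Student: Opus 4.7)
The plan is to apply the inductive hypothesis $\Sigma^*(\ul{d})$ to the shorter sub-collection $f_2, \ldots, f_r$, whose degree tuple $(d_2, \ldots, d_r)$ is strictly smaller than $\ul{d}$ in the well-order of \S\ref{ss:reg}: removing an entry is the same as replacing it with the (vacuously valid) empty list of strictly smaller positive integers.

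First I would observe that the collective strength of any sub-collection of $f_1, \ldots, f_r$ is at least that of the full collection, since in each fixed degree the set of non-trivial linear combinations can only shrink when we drop a form. Consequently, provided $C_{\ref*{lem:diag}}(\ul{d})$ is chosen to dominate the constants from Corollary~\ref{prime} (applied both to $\ul{d}$ and to $(d_2, \ldots, d_r)$) and from $\Sigma((d_2, \ldots, d_r))$, both $(f_1, \ldots, f_r)$ and $(f_2, \ldots, f_r)$ form prime sequences, and the density conclusion of $\Sigma((d_2, \ldots, d_r))$ applies to $(f_2, \ldots, f_r)$.

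Let $Y \subset V$ be the variety cut out by $f_2, \ldots, f_r$. By primality of $(f_2, \ldots, f_r)$, the variety $Y$ is irreducible of codimension exactly $r-1$, while the variety cut out by all of $f_1, \ldots, f_r$ is irreducible of codimension $r$ and is therefore a proper closed subvariety of $Y$. It follows that $f_1$ does not vanish identically on $Y$, so $U := Y \cap \{f_1 \neq 0\}$ is a non-empty Zariski open subset of $Y$. Applying $\Sigma((d_2, \ldots, d_r))$, the set $Y(K)$ is Zariski dense in $Y$ and in particular meets $U$. Any $K$-point of $U$ is the desired vector $v$.

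There is little real obstacle beyond bookkeeping against the well-order; the one place to be careful is the base case $r = 1$, in which the smaller tuple degenerates to the empty tuple. We may interpret $\Sigma(())$ as the trivial statement that $V(K)$ is Zariski dense in $V$, which is automatic because a characteristic-$0$ Birch field is necessarily infinite.
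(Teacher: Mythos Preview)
Your argument is correct and follows essentially the same route as the paper's proof: apply $\Sigma^*(\ul{d})$ to the sub-tuple $(d_2,\ldots,d_r)$ to get density of $K$-points on the variety $Y=\{f_2=\cdots=f_r=0\}$, use Corollary~\ref{prime} to ensure $f_1$ does not vanish identically on $Y$, and pick a $K$-point in the resulting non-empty open set. Your additional remarks on the sub-collection strength inequality and the degenerate case $r=1$ are helpful clarifications but do not change the underlying strategy.
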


\begin{proof}
Let $X$ be defined by $f_2=\cdots=f_r=0$. The set $X(K)$ is Zariski dense in $X$ by $\Sigma^*(\ul{d})$, since the tuple $(d_2, \ldots, d_r)$ is strictly less than $\ul{d}$. The strength condition ensures that $f_1$ does not vanish identically on $X$ (Corollary~\ref{prime}), and so there is a $K$-point of $X$ at which $f_1$ does not vanish.
\end{proof}

The following is the main result we are after.

\begin{proposition} \label{prop:diag}
Suppose $\Sigma^*(\ul{d})$ holds. Let $f_1, \ldots, f_r$ be forms of degrees $d_1, \ldots, d_r$ on $V$, let $g$ be a polynomial that does not vanish identically on $Z$, and let $\ell \ge 0$ be given. Assume that the strength of $f_1, \ldots, f_r$ is at least $C_{\ref*{prop:diag}}(\ul{d}, \ell)$. Then we can find linearly independent subspaces $V_1, \ldots, V_r, W$ of $V$ such that:
\begin{enumerate}
\item The spaces $V_1, \ldots, V_r, W$ are $f_i$-orthogonal, and $f_i$ vanishes identically on $V_j$ for $j \ne i$.
\item $V_i$ has an $f_i$-orthogonal basis $v_{i,1}, \ldots, v_{i,\ell}$ such that $f_i(v_{i,j}) \ne 0$ for each $1 \le j \le \ell$.
\item $g$ does not vanish identically on $Z \cap W$.
\end{enumerate}
\end{proposition}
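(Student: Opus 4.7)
The plan is to combine Proposition~\ref{prop:ortho} with Lemma~\ref{lem:diag}: the former produces a long list of $f_i$-orthogonal subspaces on which the tuple $(f_1,\ldots,f_r)$ retains high strength, and the latter then supplies, inside each such subspace, a vector on which a single prescribed form is nonzero while the rest vanish.

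First I observe that $\Sigma^*(\ul{d})$ implies $\Sigma^{\dag}(d)$ for $d:=\max(\ul{d})$, since any odd tuple $\ul{e}$ with $\max(\ul{e})<d$ satisfies $\ul{e}<\ul{d}$ in the sort-and-compare order. Hence Proposition~\ref{prop:ortho} is available. I apply it with $n=r\ell$ and internal strength parameter $s=C_{\ref*{lem:diag}}(\ul{d})$ to obtain linearly independent subspaces $V_1',\ldots,V_{r\ell+1}'$ of $V$ that are $f_i$-orthogonal for every $i$, on which the restriction of $(f_1,\ldots,f_r)$ to each $V_k'$ with $k\le r\ell$ has strength $\ge s$, and such that $g$ does not vanish identically on $Z\cap V_{r\ell+1}'$. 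I set $W:=V_{r\ell+1}'$, which already secures condition (c).

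Next, partition $\{1,\ldots,r\ell\}$ into $r$ blocks $B_1,\ldots,B_r$ of size $\ell$, attaching block $B_i$ to the index $i$. For each $k\in B_i$, the restricted tuple $(f_1|_{V_k'},\ldots,f_r|_{V_k'})$ has strength $\ge C_{\ref*{lem:diag}}(\ul{d})$, so Lemma~\ref{lem:diag} (applied after moving $f_i$ into the first slot) produces a vector $v\in V_k'$ with $f_i(v)\ne 0$ and $f_{i'}(v)=0$ for $i'\ne i$. Relabeling the resulting vectors as $v_{i,1},\ldots,v_{i,\ell}$, I set $V_i:=\sp(v_{i,1},\ldots,v_{i,\ell})$.

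The remaining verifications are routine bookkeeping. Linear independence of $V_1,\ldots,V_r,W$ is inherited from that of the $V_k'$, since each $v_{i,j}$ sits in a distinct $V_k'$. The $V_k'$ are $f_j$-orthogonal for every $j$, so $V_1,\ldots,V_r,W$ are as well; and for $i'\ne i$, $f_{i'}$-orthogonality of $v_{i,1},\ldots,v_{i,\ell}$ together with $f_{i'}(v_{i,j})=0$ gives $f_{i'}(\sum_j c_j v_{i,j})=\sum_j c_j^{d_{i'}}f_{i'}(v_{i,j})=0$, so $f_{i'}$ vanishes identically on $V_i$, yielding (a); and (b) follows identically in the case $i'=i$, using $f_i(v_{i,j})\ne 0$ by construction. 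The main conceptual obstacle is recognizing that one must build the orthogonal decomposition \emph{first} via Proposition~\ref{prop:ortho}---which is where the ``all at once'' Birch trick and the induction hypothesis $\Sigma^{\dag}$ get used---after which diagonalizing inside the high-strength pieces via Lemma~\ref{lem:diag} becomes straightforward.
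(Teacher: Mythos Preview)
Your proof is correct and follows essentially the same approach as the paper: apply Proposition~\ref{prop:ortho} with $n=r\ell$ and strength threshold $s=C_{\ref*{lem:diag}}(\ul{d})$ to obtain $r\ell$ orthogonal high-strength pieces plus the extra piece $W$, then use Lemma~\ref{lem:diag} inside each piece to extract a vector where exactly one $f_i$ is nonzero, and take $V_i$ to be the span of the $\ell$ vectors assigned to index $i$. The paper indexes the pieces as $V'_{i,j}$ from the outset rather than as $V'_1,\ldots,V'_{r\ell}$ with a subsequent partition, and omits the routine verifications you supply, but the argument is the same.
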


\begin{proof}
Let $d_0=\max(\ul{d})$. Since $\Sigma^*(\ul{d})$ holds, it follows that $\Sigma^{\dag}(d_0)$ holds as well. Apply Proposition~\ref{prop:ortho} to construct linearly independent subspaces $V'_{1,1}, \ldots, V'_{r, \ell}, W$ such that:
\begin{itemize}
\item The spaces $V'_{1,1}, \ldots, V'_{r, \ell}, W$ are $f_i$-orthogonal for all $i$.
\item The restriction of $f_1, \ldots, f_r$ to $V'_{i,j}$ has strength $\ge C_{\ref*{lem:diag}}(\ul{d})$ for each $i$, $j$.
\item The polynomial $g$ does not vanish identically on $Z \cap W$.
\end{itemize}
Applying Lemma~\ref{lem:diag}, pick $v_{i,j} \in V'_{i,j}$ such that $f_i(v_{i,j}) \ne 0$ but $f_k(v_{i,j})=0$ for $k \ne i$. Now take $V_i$ to be the span of $v_{i,1}, \ldots, v_{i,\ell}$.
\end{proof}

\section{Finishing the proof} \label{s:proof-end}

\subsection{Specializing diagonal forms}

We have now essentially reduced the proof of the main theorem to the case of diagonal forms. To handle these, we will specialize them to particularly simple forms. The following is the key result:

\begin{proposition} \label{prop:specialize}
Suppose $\Sigma^{\dag}(d)$ holds. Let $f=\sum_{i=1}^n c_i x_i^d$ be a diagonal form of odd degree $d$ on $V=K^n$ with each $c_i \in K$ non-zero. Then, for $n \ge C_{\ref*{prop:specialize}}(d)$, there exist linearly independent $v,w \in V$ such that $f(xv+yw)=xy^{d-1}+ay^d$ for some $a \in K$.
\end{proposition}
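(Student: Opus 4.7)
Writing
\[
f(xv+yw) = \sum_{k=0}^d \binom{d}{k} P_k(v,w)\, x^k y^{d-k}, \qquad P_k(v,w) := \sum_i c_i v_i^k w_i^{d-k},
\]
the target $xy^{d-1}+ay^d$ translates into the conditions $f(v) = P_d(v,w) = 0$, $P_k(v,w) = 0$ for $k \in \{2,\ldots,d-1\}$, $P_1(v,w) = 1/d$, with $a = P_0(v,w) = f(w)$ free. Since $P_1(\lambda v, w) = \lambda\, P_1(v,w)$ and $d$ is invertible in characteristic zero, it suffices to produce linearly independent $v,w \in V$ satisfying $f(v)=0$, $P_k(v,w)=0$ for $k \in [2,d-1]$, and $P_1(v,w) \neq 0$, and then rescale $v \mapsto v/(dP_1(v,w))$ (which preserves the other vanishing conditions).

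The key structural observation is that the bi-homogeneous forms $P_k(v,w)$ for $k \in [2,d-1]$ have bi-degree $(k, d-k)$ with both parts in $[1, d-1]$; since $d$ is odd, exactly one of $k, d-k$ is odd in each. Partitioning the $P_k$'s accordingly into two groups by parity, the system fits the hypotheses of Proposition~\ref{prop:multi}, whose use is licensed by $\Sigma^\dag(d)$. The collective strength is large for $n$ large: the diagonal $f$ has strength growing linearly in $n$ by Proposition~\ref{prop:rk-sing}, and by Proposition~\ref{prop:res-strength} this transfers to the $P_k$'s viewed as components of $\res_f$. Consequently, $X := \{(v', w') \in V \times V : P_k(v', w') = 0 \text{ for } k \in [2, d-1]\}$ has $K$-points Zariski dense in $X$. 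The remaining equation $f(v)=0$, of bi-degree $(d, 0)$, is the only one that does not fit Proposition~\ref{prop:multi}; it is handled by Proposition~\ref{prop:diag} together with Birch. Applying Proposition~\ref{prop:diag} with $r=1$, $\ul d=(d)$ (noting that $\Sigma^\dag(d) = \Sigma^*((d))$), we obtain $f$-orthogonal vectors $u_1, \ldots, u_\ell \in V$ with $f(u_j) \neq 0$; for $\ell \ge N_K(d)$, the Birch field property applied to the diagonal form $\sum_j f(u_j) s_j^d$ yields non-zero $(s_j) \in K^\ell$ with $\sum_j f(u_j) s_j^d = 0$, so $v := \sum_j s_j u_j$ satisfies $f(v) = 0$.

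The step I expect to require the most care is combining the two constructions above to find, for the $v$ just produced, a $K$-point $w \in V$ with $(v,w) \in X$, $P_1(v,w) \neq 0$, and $v, w$ linearly independent: the mere $K$-density of $X(K)$ in $X$ does not automatically give $K$-points in the prescribed fiber $X \cap \{v' = v\}$. The resolution should follow the pattern of Lemma~\ref{good-res} and its use in the proof of Proposition~\ref{prop:multi}: one shows that $X$ is geometrically irreducible (by a high-strength argument on the multi-homogeneous system $(P_2, \ldots, P_{d-1})$, in the spirit of Lemma~\ref{res-irr}), verifies dominance of the projection $X \to V$ onto the $v'$-factor (immediate, since $(v_0, 0) \in X$ for every $v_0 \in V$), and then leverages the bi-homogeneous density together with the specific $v \in V(K)$ and the open conditions $P_1 \neq 0$ and linear independence (which clearly define dense opens on generic fibers) to extract the desired $w$. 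A final rescaling then yields the required form $f(xv + yw) = xy^{d-1} + a y^d$.
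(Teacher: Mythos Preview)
Your setup is correct through the point where you apply Proposition~\ref{prop:multi} to the system $\{P_k : 2\le k\le d-1\}$: those forms do have high collective strength by Proposition~\ref{prop:res-strength}, and each has odd degree $<d$ in one of the two factors, so $K$-points are dense in $X$. The genuine gap is exactly the one you flag and then do not close: the equation $f(v)=P_d(v,w)=0$ has degree $d$ in $v$ and degree $0$ in $w$, so it fits neither factor in Proposition~\ref{prop:multi}. Your proposed workaround---produce a single $v\in V(K)$ with $f(v)=0$, then find $w$ in the fiber $X\cap\{v'=v\}$---cannot succeed in general. For fixed $v$, the fiber conditions $P_k(v,\cdot)=0$ are homogeneous in $w$ of degrees $d-k$ for $k\in[2,d-1]$, and these degrees include \emph{even} numbers; this is precisely the Birch-field obstruction the paper's introduction warns about. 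The pattern of Lemma~\ref{good-res}/Proposition~\ref{prop:multi} does not help here either: that pattern ultimately solves in the fiber using $\Sigma^\dag(d)$, which again requires all fiber equations to have odd degree $<d$. (Incidentally, your appeal to Proposition~\ref{prop:diag} is unnecessary: since $f$ is already diagonal, the standard basis vectors are $f$-orthogonal with $f(e_i)=c_i\ne 0$. But this only highlights that the single $v$ you obtain has at most $N_K(d)$ nonzero coordinates, so the $P_k(v,\cdot)$ are forms in very few effective variables, where the open condition $P_1\ne 0$ forces you into a small-variable quadratic with no guaranteed nontrivial solution.)

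The missing idea, and the heart of the paper's proof, is to build $f(v)=0$ into a \emph{family} rather than a single point. Partition $[n]$ into $r$ blocks of size $m=N_K(d)$; in each block use the Birch property to find a nonzero $u_{i,\bullet}\in K^m$ with $\sum_j c_{i,j}u_{i,j}^d=0$. Then $v(\alpha)=\sum_{i,j}\alpha_i u_{i,j}e_{i,j}$ satisfies $f(v(\alpha))=0$ identically in $\alpha\in K^r$, and one takes $w(\beta)=\sum_i\beta_i e_{i,m}$. Now all remaining conditions $P_k(v(\alpha),w(\beta))=0$ for $k\in[1,d-1]$ are bi-homogeneous of bi-degree $(k,d-k)$ in $(\alpha,\beta)$, each with an odd entry $<d$, and (after checking strength $\gtrsim r$) Proposition~\ref{prop:multi} handles the entire system at once---the $P_d$ equation has disappeared. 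This is exactly the ``all at once'' trick, applied not just to the orthogonality equations but to absorb the top-degree condition into the parametrization.
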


\begin{proof}
Let $m=N_K(d)$ be the constant from Definition~\ref{defn:birch}. Setting at most $m$ of the $x_i$'s to~0 if necessary, we assume $n=rm$. It will now be convenient to index coordinates on $V$ by $[r] \times [m]$ rather than $[n]$. We thus have $f=\sum_{i=1}^r \sum_{j=1}^m c_{i,j} x_{i,j}^d$. We let $e_{i,j}$, for $i \in [r]$ and $j \in [m]$, be a basis for $V$. By definition of $m$, for each $1 \le i \le r$, we can find a non-zero vector $u_{i,\bullet} \in K^m$ such that
\begin{displaymath}
c_{i,1} u_{i,1}^d + \cdots + c_{i,m} u_{i,m}^d = 0
\end{displaymath}
Applying a permutation, if necessary, we assume $u_{i,m} \ne 0$. For $\alpha \in K^r$, put
\begin{displaymath}
v = \sum_{i=1}^r \sum_{j=1}^m \alpha_i u_{i,j} e_{i,j}.
\end{displaymath}
Note that $f(v)=0$ for all $\alpha$. For $\beta \in K^r$, put $w = \sum_{i=1}^r \beta_i e_{i,m}$. We have
\begin{displaymath}
f(xv+yw) = \sum_{j=1}^d f^j(\alpha, \beta) x^{d-j} y^j
\end{displaymath}
where
\begin{displaymath}
f^j(\alpha, \beta) = \binom{d}{j} \sum_{i=1}^r c_{i,m} u_{i,m}^{d-j} \alpha_i^{d-j} \beta_i^j.
\end{displaymath}
We regard $f^j$ as a bihomogeneous polynomial on $K^r \times K^r$.

The form $f^j$ has strength at least $(r-1)/2$ by Proposition~\ref{prop:rk-sing}. Since the $f^j$ have different bidegrees, the sequence $f^1, \ldots, f^{d-1}$ has collective strength at least $\frac{r-1}{2(d-1)}$. Indeed, suppose some linear combination of the $f^j$'s were equal to $\sum_{i=1}^s g_i h_i$. Now take the bidegree $(d-j, j)$ piece of this equation. The bidegree $(d-j, j)$ piece of $g_i h_i$ is easily seen to have strength $\le d-1$, and so this would imply that some $f^j$ has strength $\le (d-1)s$.

We thus see that if $r$ is sufficiently large, then the $f^j$ form a prime sequence (Corollary~\ref{prime}). In particular, if $Z$ is the variety defined by the vanishing of $f^1, \ldots, f^{d-2}$, then $f^{d-1}$ does not vanish identically on $Z$. By Proposition~\ref{prop:multi}, $Z(K)$ is dense in $Z$ if $r$ is sufficiently large. We can thus find $\alpha, \beta \in K^r$ such that $f^j(\alpha, \beta)=0$ for $1 \le j \le d-2$ and $f^{d-1}(\alpha, \beta) \ne 0$. Rescaling, we can assume $f^{d-1}(\alpha, \beta)=1$. We thus have
\begin{displaymath}
f(xv+yw) = xy^{d-1}+ay^d,
\end{displaymath}
where $a=f^d(\alpha, \beta)$. If $d>1$ then $v$ and $w$ are necessarily linearly independent; if $d=1$ it is obvious that we can choose them so.
\end{proof}

\begin{corollary} \label{cor:specialize}
Suppose $\Sigma^{\dag}(d)$ holds, and let $f$ be as in the proposition. If $n \ge C_{\ref*{cor:specialize}}(d)$ then there exists linearly independent $u,v,w \in V$ such that
\begin{displaymath}
f(xv+yw+zu)=xy^{d-1}+ay^d+bz^d
\end{displaymath}
for scalars $a$ and $b$, with $b$ non-zero.
\end{corollary}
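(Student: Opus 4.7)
The plan is to derive this corollary from Proposition~\ref{prop:specialize} by reserving a single extra coordinate of $V$ to serve as $u$, and then applying the proposition to the restriction of $f$ to the remaining coordinates to produce $v$ and $w$. Concretely, I would set $C_{\ref*{cor:specialize}}(d) := C_{\ref*{prop:specialize}}(d) + 1$ and decompose $V = V' \oplus K e_n$ where $V' = \sp(e_1, \ldots, e_{n-1})$. The restriction $f|_{V'} = \sum_{i=1}^{n-1} c_i x_i^d$ is again a diagonal form of odd degree $d$ with all coefficients non-zero, in $n-1 \ge C_{\ref*{prop:specialize}}(d)$ variables. Proposition~\ref{prop:specialize} therefore produces linearly independent $v, w \in V'$ and a scalar $a \in K$ with
\[
f(xv+yw) = f|_{V'}(xv+yw) = xy^{d-1} + ay^d.
\]

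I then take $u := e_n$ and $b := c_n$, which is non-zero by hypothesis. Because $f$ is diagonal and $u = e_n$ lies in a coordinate direction disjoint from the supports of $v$ and $w$, every monomial of $f(xv+yw+zu)$ that mixes $z$ with $x$ or $y$ vanishes, so the expansion splits cleanly:
\[
f(xv+yw+zu) = f(xv+yw) + f(zu) = xy^{d-1} + ay^d + bz^d.
\]
Linear independence of $u,v,w$ is immediate: any relation $\lambda u + \mu v + \nu w = 0$ has $e_n$-component $\lambda$, forcing $\lambda = 0$, and then $\mu = \nu = 0$ by independence of $v$ and $w$.

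I do not expect a real obstacle here: the substantive work is packaged inside Proposition~\ref{prop:specialize}, and the diagonality of $f$ automatically kills all cross terms between the new vector $u$ and the pair $(v, w)$ once $u$ is supported on a disjoint coordinate. The only mild subtlety is remembering to enlarge the variable count by one so that one coordinate can be set aside for $u$ without disturbing the hypothesis of the proposition.
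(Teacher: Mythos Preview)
Your proof is correct and follows exactly the same approach as the paper: reserve the last basis vector $e_n$ for $u$ (so $b=c_n\ne 0$) and apply Proposition~\ref{prop:specialize} to the restriction of $f$ to the span of $e_1,\ldots,e_{n-1}$ to obtain $v$ and $w$. Your write-up simply spells out the constant, the vanishing of cross terms, and linear independence in more detail than the paper does.
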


\begin{proof}
Simply take $u$ to be the final basis vector of $V$, and then find $v$ and $w$ in the span of the other basis vectors by using the proposition.
\end{proof}

\subsection{A normal form}

We can now use the results we have established so far to show that a collection of high strength forms can be specialized into a kind of normal form.

\begin{proposition} \label{prop:normal}
Suppose $\Sigma^*(\ul{d})$ holds. Let $f_1, \ldots, f_r$ be forms of degrees $d_1, \ldots, d_r$ on $V$ and let $g$ be a polynomial that does not vanish identically on $Z$. Assume that the strength of $f_1, \ldots, f_r$ is at least $C_{\ref*{prop:normal}}(\ul{d})$. We can then find three dimensional subspaces $V_1, \ldots, V_r$ of $V$ and another subspace $W$ of $V$, all linearly independent, such $g$ is not identically zero on $Z \cap W$ and the restriction of $f_i$ to $V_1 \oplus \cdots \oplus V_r \oplus W$ has the form
\begin{equation} \label{eq:normal}
x_i y_i^{d_i-1} + a_i y_i^{d_i} + b_i z_i^{d_i} + h_i(w_1, \ldots, w_n),
\end{equation}
where $x_i$, $y_i$, and $z_i$ are coordinates on $V_i$, the $w_i$'s are coordinates on $W$, $h_i \in \cP_{d_i}(W)$, and $a_i$ and $b_i$ belong to $K$, with $b_i$ non-zero.
\end{proposition}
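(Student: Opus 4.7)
The plan is to combine Proposition~\ref{prop:diag} with Corollary~\ref{cor:specialize}: the former produces an orthogonal decomposition in which each $f_i$ becomes a diagonal form with all coefficients nonzero on its own block, and the latter specializes such a diagonal form into the three-variable shape $xy^{d-1}+ay^d+bz^d$. Applying the second operation inside each block of the first will produce the normal form \eqref{eq:normal}.

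Concretely, I would fix $\ell := \max_{1\le i\le r} C_{\ref*{cor:specialize}}(d_i)$ and set $C_{\ref*{prop:normal}}(\ul d) := C_{\ref*{prop:diag}}(\ul d,\ell)$. Applying Proposition~\ref{prop:diag} with this $\ell$ and with the given $g$ yields linearly independent subspaces $V'_1,\ldots,V'_r,W\subset V$ that are simultaneously $f_i$-orthogonal for every $i$, on which $f_i$ vanishes identically on $V'_j$ for $j\ne i$, such that $V'_i$ carries an $f_i$-orthogonal basis $v_{i,1},\ldots,v_{i,\ell}$ with each $f_i(v_{i,j})\ne 0$, and such that $g$ is not identically zero on $Z\cap W$. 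In the dual coordinates on $V'_i$, the restriction $f_i\vert_{V'_i}$ is then the diagonal form $\sum_{j=1}^\ell f_i(v_{i,j})\,t_j^{d_i}$ in $\ell$ variables with every coefficient nonzero.

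Next I would apply Corollary~\ref{cor:specialize} to each $f_i\vert_{V'_i}$. The hypothesis $\Sigma^{\dag}(d_i)$ follows from $\Sigma^*(\ul d)$, since any odd tuple $\ul e$ with $\max\ul e<d_i\le \max\ul d$ satisfies $\ul e<\ul d$ in the order on sorted tuples; and the dimensional hypothesis $\ell\ge C_{\ref*{cor:specialize}}(d_i)$ holds by construction. The corollary then produces linearly independent vectors $u_i,v_i,w_i\in V'_i$ with
\[
f_i(xv_i+yw_i+zu_i) = xy^{d_i-1}+a_iy^{d_i}+b_iz^{d_i},\qquad b_i\in K^\times.
\]
Setting $V_i:=\sp(v_i,w_i,u_i)\subset V'_i$ gives a three-dimensional subspace.

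Finally, since $V_i\subseteq V'_i$, the subspaces $V_1,\ldots,V_r,W$ remain linearly independent and $f_i$-orthogonal, and $f_i$ continues to vanish on $V_j$ for every $j\ne i$. Hence on $V_1\oplus\cdots\oplus V_r\oplus W$ the form $f_i$ splits as $f_i\vert_{V_i}+f_i\vert_W$, which is precisely \eqref{eq:normal} with $h_i:=f_i\vert_W$; the condition on $g$ is inherited directly from Proposition~\ref{prop:diag}. I do not anticipate any substantive obstacle to the argument—the proposition is essentially a bookkeeping synthesis of the two previous results—and the only minor care required is the simultaneous choice of $\ell$ and the strength threshold, handled by the assignments above.
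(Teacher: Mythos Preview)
Your proposal is correct and follows exactly the approach the paper takes: its proof is the one-line remark that the result ``follows from Proposition~\ref{prop:diag} and Corollary~\ref{cor:specialize},'' and you have simply unpacked this by choosing $\ell=\max_i C_{\ref*{cor:specialize}}(d_i)$, applying Proposition~\ref{prop:diag}, and then applying Corollary~\ref{cor:specialize} inside each block $V'_i$. Your verification that $\Sigma^*(\ul d)$ implies $\Sigma^{\dag}(d_i)$ and your bookkeeping for orthogonality and the condition on $g$ are accurate.
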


\begin{proof}
This follows from Propositions~\ref{prop:diag} and Corollary~\ref{cor:specialize}.
\end{proof}

\subsection{Final steps}

We are finally ready to prove our main result.

\begin{proof}[Proof of Theorem~\ref{main}]
We suppose $\Sigma^*(\ul{d})$ holds and prove $\Sigma(\ul{d})$. Thus let $f_1, \ldots, f_r$ be given of degrees $d_1, \ldots, d_r$. Let $g$ be a function that does not vanish identically on $Z$. We must construct a $K$-point of $Z$ at which $g$ is non-vanishing. Apply Proposition~\ref{prop:normal}, and let $V'=V_1 \oplus \cdots \oplus V_r \oplus W$. The variety $Z \cap V'$ is defined by the vanishing of the forms \eqref{eq:normal}, and is easily seen to be irreducible (see \cite[Proposition~4.4]{BDS}). Moreover, $Z \cap V'$ is a rational variety, since once can solve for $x_i$ in each equation. It follows that the $K$-points of $Z \cap V'$ are Zariski dense. Since $g$ is non-vanishing on $Z \cap V'$, there is thus a $K$-point at which it is non-vanishing.
\end{proof}

\begin{remark}
In the above proof, it is important that $Z \cap V'$ is irreducible. This would not necessarily be the case if the $b_iz_i^{d_i}$ terms were absent, and this is why we have taken measures to ensure their presence.
\end{remark}

\section{Examples of Birch fields} \label{s:birch}

Any Brauer field is trivially a Birch field. We have also seen that any number field is a Birch field. This shows that there are Birch fields that are not Brauer fields, e.g., the field of rational numbers. We now give some additional examples.

\begin{proposition} \label{prop:real-birch}
The field $K=\R(t_1, \ldots, t_p)$ is Birch, for any $p \ge 1$.
\end{proposition}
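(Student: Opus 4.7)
The plan is to reduce the statement to the general claim: if $L$ is a Birch field of characteristic zero, then $L(t)$ is also a Birch field. Granting this, Proposition~\ref{prop:real-birch} follows by iterating on $p$ starting from $L_0 = \R$; the field $\R$ is trivially Birch with $N_\R(d) \le 2$ for every odd $d$, since $a_1 x^d + a_2 y^d = 0$ is solved by $(1, (-a_1/a_2)^{1/d})$, noting that a real $d$-th root of any real number exists when $d$ is odd.

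To prove the claim that $L$ Birch implies $L(t)$ Birch, fix an odd $d$ and non-zero $a_1, \ldots, a_n \in L(t)$. After clearing denominators, we may assume $a_i \in L[t]$ with $\deg_t(a_i) \le D$ for some $D$. We look for polynomial solutions $x_i = \sum_{j=0}^e c_{i,j} t^j \in L[t]$ of degree at most $e$, where $e$ is to be chosen. Expanding $\sum_i a_i x_i^d$ as a polynomial in $t$ of degree at most $de + D$ and requiring each coefficient to vanish produces a system of at most $de + D + 1$ forms, all of the same odd degree $d$, in the $n(e+1)$ variables $\{c_{i,j}\}$ over $L$. By Birch's theorem applied over $L$, this system has a non-trivial common zero provided $n(e+1)$ exceeds the appropriate Birch constant over $L$, and such a zero yields a non-trivial solution in $L(t)^n$ to the original equation.

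The main obstacle is to ensure that $n$ depends only on $d$, not on the input-dependent parameter $D$. Taking $e$ large relative to $D$ makes the effective number of equations roughly $de$, and one then needs the Birch constant over $L$ for $r$ forms of degree $d$ to grow at most linearly in $r$ in order to balance variables against equations. To secure this, one exploits the block-diagonal structure of the coefficient system: each coefficient equation has the form $E_k = \sum_{i=1}^n F_{i,k}(c_{i,\bullet})$, where $F_{i,k}$ depends only on the block of variables $c_{i,\bullet}$ corresponding to the coefficients of $x_i$. This structure fits naturally into a Birch-style orthogonalization argument carried out block by block, ultimately reducing the problem to a single diagonal equation of odd degree $d$ in $n$ variables over $L$, which is solvable by the hypothesis that $L$ is Birch whenever $n \ge N_L(d)$. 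Establishing the requisite quantitative version of Birch's theorem for the specific block-diagonal systems arising here is the main technical step.
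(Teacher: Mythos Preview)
Your proposal has a genuine gap. You reduce everything to the general claim ``$L$ Birch $\Rightarrow$ $L(t)$ Birch,'' but this implication is not known; indeed, the paper itself poses as an open question whether $\Q(t)$ is a Birch field, even though $\Q$ is Birch. Your final paragraph correctly identifies the obstacle --- for an arbitrary Birch field $L$, the Birch constant for $r$ forms of degree $d$ is not known to grow linearly (or even polynomially) in $r$ --- but does not resolve it. The block-diagonal structure you point to does not obviously help: the coefficient forms $E_k$ are not diagonal in the variables $c_{i,j}$, only block-separated, and carrying out a Birch-style orthogonalization of a \emph{system} of such forms over $L$ is precisely the step whose cost you need to control. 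The closing sentence, ``establishing the requisite quantitative version of Birch's theorem for the specific block-diagonal systems arising here is the main technical step,'' is an acknowledgment that the argument is incomplete rather than a proof.

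The paper's argument sidesteps this entirely by using a property of $\R$ that is much stronger than being Birch: over $\R$, any system of $r$ homogeneous forms of odd degree in $n>r$ variables has a non-trivial real zero (the real dimension of the common zero set is at least $n-r$). This supplies exactly the linear control on the number of equations that your approach lacks, and it lets a Tsen--Lang style count of variables versus equations (as the degree parameter $s\to\infty$) go through for all $p$ transcendentals simultaneously. No inductive step of the form ``$L$ Birch $\Rightarrow$ $L(t)$ Birch'' is ever invoked, and no such step is available with current tools.
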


\begin{proof}
Fix $d \ge 1$ odd and put $N=d^p+1$. We will show that $N_K(d) \le N$.

Consider an equation $f(x_1, \dots, x_n)=0$, where $f$ is a homogeneous form of odd degree $d$ with coefficients in $K$ and $n \ge N$. We show that this equation has a non-trivial solution. Multiplying by a non-zero element of $K$, we assume that the coefficients of $f$ belong to $\R[t_1, \ldots, t_p]$. Let $r$ be the maximum total degree of a coefficient of $f$, let $s$ be a number to be determined later, and write $x_i=\sum_{\vert a \vert \le s} y_{i,a} t^a$, where the sum is over multi-degrees $a=(a_1, \ldots, a_p)$ of total degree at most $s$ and $y_{i,a} \in \R$. Then $f(x)=\sum_{\vert a \vert \le r+ds} f_a(y) t^a$, where $f_a$ is a homogeneous form of degree $d$ in the $y$ variables. A non-trivial solution to the system of equations $f_a(y)=0$ (in $\R$) will yield a non-trivial solution to $f(x)=0$ (in $K$).

Now, the number of equations (i.e., the number of $f_a$'s) is the number of monomials in $t_1, \ldots, t_p$ of total degree at most $r+ds$, which is asymptotically $(r+ds)^p/p!$ (for $r$, $d$, $p$ fixed and $s$ varying). The number of variables (i.e., the number of $y$'s) is asymptotically $n s^p/p!$ (in the same regime). Since $n>d^p$, the number of variables grows faster than the number of variables with $s$. Thus, given $f$, we can choose $s$ so that there are more variables than equations, and the existence of a non-trivial solution now follows from Lemma~\ref{lem:real-soln}.
\end{proof}

\begin{lemma} \label{lem:real-soln}
Let $f_1, \ldots, f_r$ be odd degree homogeneous forms over $\R$ in $n>r$ variables. Then the system of equations $f_i=0$ has a non-trivial solution in $\R$.
\end{lemma}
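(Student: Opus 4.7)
The plan is to apply the Borsuk--Ulam theorem. Specifically, the relevant version states that any continuous odd map $S^m \to \R^k$ with $k \le m$ must have a zero.

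First I would observe that, because each $d_i$ is odd, the form $f_i$ satisfies $f_i(-x) = (-1)^{d_i} f_i(x) = -f_i(x)$, so $f_i$ is an odd function on $\R^n$. Assembling them, the map $f = (f_1, \ldots, f_r) \colon \R^n \to \R^r$ is continuous and odd. Restricting $f$ to the unit sphere $S^{n-1} \subset \R^n$ yields a continuous odd map $F \colon S^{n-1} \to \R^r$.

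Next I would invoke Borsuk--Ulam: since the hypothesis $n > r$ gives $n - 1 \ge r$, there must exist a point $x \in S^{n-1}$ with $F(x) = 0$. This $x$ is a non-trivial real solution to the system $f_1 = \cdots = f_r = 0$, as required.

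There is no real obstacle here beyond citing (or briefly recalling) Borsuk--Ulam in the form needed; the one thing to be careful about is ensuring the dimension bookkeeping is correct, namely that $n > r$ translates to the hypothesis $n-1 \ge r$ of the topological statement, and that oddness of degree really does give oddness of $f_i$ as a function (which uses $\ch K = 0$ and, more importantly, that we are working over $\R$ so that the antipodal action on $S^{n-1}$ is well-defined).
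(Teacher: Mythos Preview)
Your argument is correct. The map $f=(f_1,\ldots,f_r)$ is odd because each $f_i$ is homogeneous of odd degree, and the Borsuk--Ulam theorem in the form ``every continuous odd map $S^m\to\R^k$ with $k\le m$ has a zero'' applies since $n>r$ gives $n-1\ge r$. This yields a point $x\in S^{n-1}$ with $f(x)=0$, which is the desired non-trivial solution.

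The paper does not actually prove the lemma; it simply cites \cite[p.~247]{Schmidt} for the stronger assertion that the real solution set has dimension at least $n-r$. Your Borsuk--Ulam argument is a genuinely different, self-contained route: it gives only existence of a single non-trivial zero rather than a dimension bound, but that is all the lemma claims and all that Proposition~\ref{prop:real-birch} needs. The trade-off is that Schmidt's approach (which proceeds by induction on $r$, restricting to the zero set of one form and tracking dimension) yields quantitative information, while your topological argument is shorter and requires no induction. One small quibble: the aside about $\ch K=0$ is superfluous, since over $\R$ this is automatic; the only relevant point is that the antipodal map on $S^{n-1}$ interacts correctly with odd-degree homogeneity.
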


\begin{proof}
In fact, the real dimension of the solution set is at least $n-r$, see, e.g., \cite[p.~247]{Schmidt}.
\end{proof}

We note that $\R(t_1, \ldots, t_p)$ is formally real (i.e., $-1$ is not a sum of squares), and so it is not a Brauer field \cite[\S 2.1]{BDS}. We also have the following general procedure to generate new Birch fields:

\begin{proposition} \label{prop:Birch-finite}
Let $K$ be a Birch field and let $L/K$ be a finite extension field. Then $L$ is a Birch field.
\end{proposition}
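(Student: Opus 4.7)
The plan is to use restriction of scalars from $L$ down to $K$, combined with the crucial observation that since Birch's theorem (Theorem~\ref{Birch}) is already available over the Birch field $K$, we can handle arbitrary systems of odd-degree forms over $K$, not merely diagonal ones. Let $m=[L:K]$, fix a $K$-basis $\alpha_1,\ldots,\alpha_m$ of $L$, and fix an odd integer $d$. We aim to bound $N_L(d)$.

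Given a diagonal equation $\sum_{i=1}^n a_i x_i^d=0$ over $L$ with each $a_i\in L^\times$, substitute $x_i=\sum_{j=1}^m y_{i,j}\alpha_j$ with $y_{i,j}\in K$. Expanding $a_i x_i^d$ as a degree-$d$ form in the $y_{i,j}$ with coefficients in $L$, summing over $i$, and collecting in the $\alpha$-basis, we obtain
\begin{displaymath}
\sum_{i=1}^n a_i x_i^d \;=\; \sum_{j=1}^m F_j(y)\,\alpha_j,
\end{displaymath}
where each $F_j$ is a homogeneous form of degree $d$ over $K$ in the $nm$ variables $y_{i,j}$. Since $\alpha_1,\ldots,\alpha_m$ are $K$-linearly independent, the original equation in $L$ is equivalent to the simultaneous vanishing of $F_1,\ldots,F_m$ over $K$; moreover, by the same linear independence, any non-trivial $K$-solution $(y_{i,j})$ yields an $L$-tuple $(x_i)$ that is not all zero.

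Now apply Theorem~\ref{Birch} to the Birch field $K$ with the $m$-tuple of odd degrees $\ul{d}=(d,\ldots,d)$: whenever $nm\ge C_{\ref*{Birch}}(\ul{d})$, the system $F_1=\cdots=F_m=0$ admits a non-trivial solution in $K^{nm}$. This gives
\begin{displaymath}
N_L(d)\;\le\;\left\lceil C_{\ref*{Birch}}(\underbrace{d,\ldots,d}_{m})/m\right\rceil<\infty,
\end{displaymath}
so $L$ is Birch. There is essentially no obstacle here: the mild subtlety is simply that the restricted system is not diagonal, which would defeat a naive appeal to the definition of Birch field, but is harmless once we use the full Birch theorem over $K$. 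The verification that a non-trivial $K$-solution produces a non-trivial $L$-solution is immediate from $K$-linear independence of the basis.
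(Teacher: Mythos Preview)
Your proof is correct and follows essentially the same route as the paper's own argument: restriction of scalars via a $K$-basis of $L$, yielding $m$ homogeneous degree-$d$ forms over $K$ in $nm$ variables, and then an appeal to Theorem~\ref{Birch} over $K$. The paper's proof is virtually identical, including the final bound $N_L(d)\le\lceil m^{-1}C_{\ref*{Birch}}(d,\ldots,d)\rceil$.
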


\begin{proof}
The same argument from \cite[Proposition~2.7]{BDS} applies. For the sake of completeness, we recall it. Let $\alpha_1, \ldots, \alpha_m$ be a $K$-basis of $L$, and let $f$ be a diagonal form over $L$ of odd degree $d$ in variables $x_1, \ldots, x_n$. Write $x_i=\sum_{j=1}^m \alpha_j y_{i,j}$, where the variables $y_{i,j}$ take values in $K$. By similarly expressing the coefficients of $f$ in the $\alpha$ basis, we obtain $f(x)=\sum_{j=1}^m \alpha_j f_j(y)$, where each $f_j$ is a homogeneous form over $K$ of degree $d$ in the $nm$ variables $y_{i,j}$. If $nm$ exceeds the constant $C_{\ref{Birch}}(d,m)$ then Birch's theorem (Theorem~\ref{Birch}) shows that there is a non-zero $y \in K^{nm}$ such that $f_j(y)=0$ for all $j$. This gives a non-zero $x \in L^n$ such that $f(y)=0$. We thus see that $N_L(d)$ is bounded above by the ceiling of $m^{-1} \cdot C_{\ref{Birch}}(d,m)$.
\end{proof}

\begin{remark}
The above argument, combined with the theorem of Leep--Star, shows that the class of Leep--Star fields is closed under finite extensions.
\end{remark}

Combining the above two propositions, we obtain the following corollary.

\begin{corollary} \label{cor:fg-birch}
Any finitely generated extension of $\R$ is a Birch field.
\end{corollary}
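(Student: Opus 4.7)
The plan is to combine the two propositions immediately preceding the corollary via the standard structure theorem for finitely generated field extensions. Given a finitely generated extension $L/\R$, I would first select a transcendence basis $t_1, \ldots, t_p$ of $L$ over $\R$, which exists and is finite because $L$ is finitely generated over $\R$. Set $K = \R(t_1, \ldots, t_p)$.

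Next I would argue that $L/K$ is a finite extension. Since $L$ is finitely generated as a field extension of $\R$, it is also finitely generated as a field extension of $K$. Because $t_1, \ldots, t_p$ is a transcendence basis, every element of $L$ is algebraic over $K$, so $L/K$ is algebraic. A field extension that is both finitely generated and algebraic is finite, so $[L:K] < \infty$.

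Now I would invoke the two ingredients already proved. By Proposition~\ref{prop:real-birch}, the purely transcendental extension $K = \R(t_1, \ldots, t_p)$ is a Birch field. By Proposition~\ref{prop:Birch-finite}, any finite extension of a Birch field is a Birch field, and applying this to the finite extension $L/K$ yields the conclusion that $L$ is a Birch field.

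There is essentially no obstacle here: the work has been done in Propositions~\ref{prop:real-birch} and~\ref{prop:Birch-finite}, and the corollary is a one-line deduction from the transcendence basis decomposition. The only point worth stating carefully is why $L/K$ is finite (finitely generated plus algebraic), which I would do explicitly for the reader.
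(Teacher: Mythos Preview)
Your proposal is correct and is exactly the argument the paper has in mind: the paper simply states that the corollary follows by combining Propositions~\ref{prop:real-birch} and~\ref{prop:Birch-finite}, and your transcendence-basis decomposition is the standard way to effect that combination. The only edge case worth a word is $p=0$, which Proposition~\ref{prop:real-birch} as stated does not cover, but then $L$ is $\R$ or $\mathbb{C}$ and the conclusion is immediate.
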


\begin{remark}
The above results hold with $\R$ replaced by any real-closed field, such as the field of real algebraic numbers.
\end{remark}

\begin{remark}
Recall that a $C_p$ field, in the sense of Lang \cite{Lang}, is one for which any homogeneous form of degree $d \ge 1$ in $>d^p$ variables admits a non-trivial solution. Any $C_p$ field is trivially a Brauer field \cite[\S 2.4]{BDS}. If $K$ is a $C_p$ field then $K(t)$ is a $C_{p+1}$ field; this result was first proved by Lang, but the argument goes back to Tsen. It is exactly this argument we have used in the proof of Proposition~\ref{prop:real-birch}.

This suggests a possible ``Birch generalization'' of the theory of $C_p$ fields: define a $C^+_p$ field to be one for which any homogeneous form of \emph{odd} degree $d \ge 1$ in $>d^p$ variables admits a non-trivial solution. Any $C^+_p$ field is trivially a Birch field. The proof Proposition~\ref{prop:real-birch} actually shows that $\R(t_1, \ldots, t_p)$ is a $C^+_p$ field. Is it a general fact that that if $K$ is a $C^+_p$ field then $K(t)$ is a $C^+_{p+1}$ field? Even more generally, one can attempt to find a ``Leep--Star generalization'' of the $C_p$ theory.
\end{remark}

\begin{question}
We close with a few questions about Birch fields:
\begin{enumerate}
\item Suppose $K$ is a field and there is a quadratic extension $L/K$ with $L$ a Brauer field. Is $K$ a Birch field?
\item Does every Birch field admit a finite extension that is a Brauer field?
\item Is $\R(\!(t_1, \ldots, t_p)\!)$ a Birch field?
\item Is $\Q(t)$ a Birch field? Note that if $K$ is an imaginary quadratic number field then it is known that $K$ is a Brauer field, but unknown if $K(t)$ is a Brauer field. \qedhere
\end{enumerate}
\end{question}

\end{document}